\newcommand{\sign}{\text{sign}}
\DeclareMathOperator*{\argmin}{arg\,min}
\journalname{International Journal of Computing and Visualization in Science and Engineering}
\newcommand{\MG}[1]{\textcolor{magenta}{#1}}
\begin{document}

\title{An Optimized Space-Time Multigrid Algorithm for Parabolic PDEs}

\author{Bastien Chaudet-Dumas \and Martin J. Gander \and Au\v{s}ra Pogo\v{z}elskyt\.{e}}

\authorrunning{B. Chaudet-Dumas, M. J. Gander, A. Pogo\v{z}elskyt\.{e}} 


\institute{Bastien Chaudet-Dumas \at
           Section of Mathematics, University of Geneva\\
           \email{bastien.chaudet@unige.ch}
           \and
           Martin J. Gander \at
           Section of Mathematics, University of Geneva\\
           \email{martin.gander@unige.ch}
           \and
           Au\v{s}ra Pogo\v{z}elskyt\.{e} \at
           Section of Mathematics, University of Geneva\\
           \email{ausra.pogozelskyte@unige.ch}
}

\date{}

\maketitle

\begin{abstract}
   We investigate three directions to further improve the highly efficient Space-Time Multigrid algorithm with block-Jacobi smoother introduced in \cite{gander2016analysis}. First, we derive an analytical expression for the optimal smoothing parameter in the case of a full space-time coarsening strategy;
   second, we propose a new and efficient direct coarsening strategy which simplifies the code by preventing changes of coarsening regimes; and third, we also optimize the entire two cycle to investigate if further efficiency gains are possible.
   Especially, we show that our new coarsening strategy leads to a significant efficiency gains when the ratio $\tau/h^2$ is small, where $\tau$ and $h$ represent the time and space steps.
   Our analysis is performed for the heat equation in one spatial dimension, using centered finite differences in space and Backward Euler in time, but could be generalized to other situations. We also present numerical experiments that confirm our theoretical findings.

\keywords{Multilevel algorithm \and parabolic problem \and Space-Time Multigrid \and Local Fourier Analysis \and Parallel in Time}
\end{abstract}

\section{Introduction}

    During the last twenty years, faster computing times have been achieved through the increase of the number of processing cores available rather than the development of faster clock speeds. 
    However, as spatial parallelism techniques saturate, developing parallel techniques for the time dimension becomes important.     
    Popular examples of such methods include Parareal~\cite{lions2001resolution} and its multilevel version: Multigrid Reduction-in-Time (MGRIT) \cite{falgout2014parallel}.
    For a review on Parallel-in-time methods, the reader is referred to~\cite{gander201550,ong2020applications,falgout2017multigrid}.
    Yet, many of the methods studied in these references coarsen the problem only in the time dimension. Therefore they are not truly scalable when applied to space-time problems. In order to overcome this issue, new methods that also include spatial coarsening have been developed, see e.g.~\cite{horton1995space,fischer2005parareal,neumuller2013space,gander2016analysis,ruprecht2014convergence,schroder2021,gander2022}.

    Among these space-time approaches, we are interested in a specific, efficient Space-Time Multigrid algorithm (STMG), originally introduced
    in~\cite{gander2016analysis} based on a block-Jacobi smoother. This method, which benefits from excellent scalability properties, has been successfully applied to several problems~\cite{neumuller2013space,neumuller2018fully,neumuller2019parallel}. 
    
In~\cite{gander2016analysis}, one of the goals of the authors was
  to develop a space-time multigrid method based only on standard
  components that always permits coarsening in the time direction, in
  order to overcome a limitation in the early parabolic multigrid
  method \cite{Hackbusch:1984:PMG}, leading to a faster and more
  scalable multigrid algorithm. In doing so, in~\cite{gander2016analysis}, emphasis was put on semi-coarsening in time during the optimization of the smoother with respect to the damping parameter. The analysis then reveals that the ratio
$\sigma:=\frac{\tau}{h^2}$, where $\tau$ and $h$ denote the time and
space steps, plays a key role in the convergence of the
method. Especially, in order to guarantee fast convergence, specific
coarsening strategies are needed for this ratio to stay above a given threshold. The authors in \cite{gander2016analysis} then propose a strategy based on alternating semi-coarsening in
time, and full space-time coarsening.

    Our goal here is to further optimize the STMG algorithm from~\cite{gander2016analysis} in three ways. First, we optimize the damping parameter of the block-Jacobi smoother for the case of full space-time coarsening. Second, we propose a new coarsening strategy to control the ratio $\sigma$, which results in a simpler algorithm while maintaining efficiency. And third, we investigate if optimizing the entire two-level process like in \cite{lucero2020optimization} can lead to further gains.
    The whole analysis is conducted in the specific case of the heat equation in one spatial dimension discretized using centered finite differences in space and the Backward Euler method in time, but our approach could be generalized to higher dimensions and other equations.
    
    We begin with introducing the continuous and discrete versions of the model problem in Section~\ref{sec:model-prob}. Then, in Section~\ref{sec:STMG}, we present the STMG algorithm in our specific context. In Section~\ref{sec:analysis-tech}, we review some useful notations and results from Local Fourier Analysis. Next, we analyze the performance of the smoother and derive the optimal damping parameter for full space-time coarsening in Section~\ref{sec:smoother}. Section~\ref{sec:coarse-grid-analysis} is dedicated to the comparison of different coarsening strategies preventing $\sigma$ from becoming too small, as well as to the optimization of the whole two-level process. Finally, numerical results are presented in Section~\ref{sec:num-exp}.

    \section{Model problem}
    \label{sec:model-prob}

    We consider as our parabolic model problem the one-dimensional heat equation. Let $T>0$, the space-time domain is given by $\Omega\times(0,T)$ where $\Omega:=(0,1)$. Given a source term $f$ and an initial condition $u_0$, the strong formulation of the problem reads
    \begin{equation}
        \label{eq:continuous-heat-equation}
        \begin{cases}
            \partial_{t}u(x, t) = \partial_{xx}u(x, t) + f(x, t), & (x, t)\in\Omega\times(0, T), \\ 
            u(x, t) = 0, & x\in\partial\Omega, \\ 
            u(x, 0) = u_{0}(x), & x\in \Omega.
        \end{cases}
    \end{equation}
    By \cite[Chapter 4]{lions2012non}, we know that if $f \in L^2(0,T;H^{-1}(\Omega))$ and $u_0\in L^2(\Omega)$, then there exists a unique solution $u\in L^2(0,T;H^1_0(\Omega))\cap C^0(0,T;L^2(\Omega))$ to problem \eqref{eq:continuous-heat-equation}.
    Under such regularity assumptions on the data, we may discretize the problem in space using a step $h$, which leads to the matrix differential equation
        \begin{equation}
        \label{eq:heat-discretized-in-space}
        \begin{cases}
            \partial_{t}\boldsymbol{u}(t) = A_{h}\boldsymbol{u}(t) + \boldsymbol{f}(t), & t\in (0, T),\\ 
            \boldsymbol{u}(0) = \boldsymbol{u}_{0},
        \end{cases}
    \end{equation}
    where $A_h\in \mathbb{R}^{N_x\times N_x}$ is the discrete Laplace operator and $N_x$ is the number of space steps.
    Problem~\eqref{eq:heat-discretized-in-space} can then be discretized in time with a time step $\tau$ using a Runge-Kutta method. Let $R(z)=: Q^{-1}(z)B(z)$ be the associated stability function~\cite{wanner1996solving}, where $Q$ and $B$ are given polynomials. 
    For simplicity, we introduce $Q_{\tau, h} := Q(\tau A_h)$ and $B_{\tau, h}:=B(\tau A_h)$. 
    Using these notations, we finally obtain
    \begin{equation}
        \label{eq:discrete-time-stepping}
        \begin{cases}
            Q_{\tau, h}\boldsymbol{u}_{n+1} = B_{\tau, h}\boldsymbol{u}_{n} + \boldsymbol{f}_{n+1} & n= 0, \ldots, N_t-1,\\ 
            \boldsymbol{u}_{0} = \boldsymbol{u}_{0},
        \end{cases}
    \end{equation}
    where $N_t$ denotes the number of time steps.
    The linear system~\eqref{eq:discrete-time-stepping} can then be rewritten as a matrix system,
    \begin{equation}\label{eq:discrete-system}
        \underbrace{
        \begin{pmatrix}
            Q_{\tau, h} \\ 
            -B_{\tau, h} & Q_{\tau, h} \\ 
            & \ddots & \ddots \\ 
            && -B_{\tau, h} & Q_{\tau, h}
        \end{pmatrix}}_{=:L_{\tau,h}}
        \underbrace{
        \begin{pmatrix}
            \boldsymbol{u}_{1} \\ 
            \boldsymbol{u}_{2} \\
            \vdots\\
            \boldsymbol{u}_{N_t}
        \end{pmatrix}}_{=:\boldsymbol{u}}
        = \underbrace{\begin{pmatrix}
            \boldsymbol{f}_{1}+ B_{\tau, h}\boldsymbol{u}_{0} \\ 
            \boldsymbol{f}_{2}\\ 
            \vdots \\
            \boldsymbol{f}_{N_t}
        \end{pmatrix}}_{=: \boldsymbol{f}}\;.
    \end{equation}

\section{Space-Time Multigrid}
\label{sec:STMG}

    System~\eqref{eq:discrete-system} can be solved using the STMG algorithm. It follows the same steps as traditional multigrid algorithms~\cite{trottenberg2000multigrid}, which yields in the case of one simple V-cycle:  
    \begin{enumerate}
        \item pre-smoothing ($\nu_1$ steps); 
        \item computation of the residual and restriction to the coarse grid;
        \item coarse grid solve;
        \item prolongation to the fine grid and correction;
        \item post-smoothing ($\nu_2$ steps).
    \end{enumerate}
    In the following subsections, we will describe each of the different components of the algorithm in detail. 
    
    \subsection{Smoothing}
    Each iteration of STMG starts with $\nu_{1}$ pre-smoothing steps and ends with $\nu_{2}$ post-smoothing steps. Smoothing is used to damp highly oscillating modes in the error so that it can be properly represented on the coarse grid. Following \cite{gander2016analysis}, we use a damped block-Jacobi smoother, 
    \begin{equation}
        \label{eq:def-smoother}
        S_{\tau,h}^{\nu} = [I_{\tau,h} - \omega D_{\tau,h}^{-1}L_{\tau,h}]^{\nu}\;,
    \end{equation}
    where $\omega$ is the damping parameter and $D_{\tau,h}$ is a block-diagonal square matrix of size $N_tN_x$ composed of $N_t$ times the block $Q_{\tau,h}$ on the diagonal. 

    \begin{remark}
        Let $\Gamma_{-1,\tau}$ be the $N_t\times N_t$ matrix whose entries are ones on the lower-diagonal and zeros elsewhere. Using the definition of $L_{\tau, h}$ in~\eqref{eq:discrete-system}, the block-Jacobi smoother is given by 
        \begin{equation*}
            S_{\tau, h} = [I_{\tau}\otimes (1 - \omega)I_{h} - \omega\Gamma_{-1,\tau}\otimes Q_{\tau, h}^{-1}B_{\tau, h} ]\;.
        \end{equation*}
    \end{remark}

    \begin{remark}
      For the smoothing iterative procedure to be convergent, we need the spectral radius of the smoother $S_{\tau,h}$ to verify $\rho(S_{\tau, h}) = |1 - \omega|<1$, that is $\omega\in(0, 2)$. 
      In the following, we will restrict ourselves to case $\omega\in(0, 1]$ in order to avoid non-uniform convergence, see~\cite{gander2016analysis}.
    \end{remark}

    \subsection{Transfer operators}
    To go from the coarse to the fine grid and back, we need
    restriction and prolongation operators for both space and time
    grids, which we generically denote by $R_h$, $P_h$ and
    $R_{\tau}$, $P_{\tau}$. Their definitions depend on the type
    of coarsening we do, and are sometimes identities:
    \begin{enumerate}
        \item semi-coarsening in space $x$: $P_{\tau} = R_{\tau} = I_{\tau}$;
        \item semi-coarsening in time $t$: $P_{h} = R_{h} = I_{h}$; 
        \item full-coarsening in space and time;
        \item new coarsening strategy (see details below).        
    \end{enumerate}
    Unless specified otherwise, when the transfer operators are not equal to the identity, the prolongation operator is taken as the linear interpolation and the restriction operator is computed using the full-weighting technique.
    Note that using different transfer operators would result in a similar analysis.

    The approach used in~\cite{gander2016analysis} is to coarsen by the same factor in space and time at each level. At some point, the smoothing operator does not smooth very well (this will be further discussed in Section~\ref{sec:smoother}) and we switch to doing semi-coarsening in time.
    To simplify the analysis, this technique will consist of a factor $2$ space-time coarsening followed by a semi-coarsening in time (see the ''Original strategy'' in Figure~\ref{fig:coarsening-strategies}). This is a slower technique than the one initially proposed but it gives us a basis for comparison. 

    The coarsening technique we propose consists in coarsening by a factor $4$ in time and $2$ in space, therefore maintaining the ratio $\sigma = \tau/h^2$ constant (see the ''New strategy'' in Figure~\ref{fig:coarsening-strategies}).
    This will yield a computationally cheaper strategy, while maintaining the convergence properties of the original method, therefore making the STMG algorithm even more efficient.

    \subsection{Coarse-grid solve}
    
    The restricted residual is solved on the coarser grid, meaning that we either perform a direct solve (two-level method) or we repeat steps 1 to 5 on this coarser grid until we reach a grid for which a direct solve is considered to be cheap (multi-level method). 

    For the coarse problem, we have two options: either use a Galerkin approximation of the operator $L_{\tau, h}$, that is e.g. $L_{2\tau, 2h} = (R_{\tau}\otimes R_{h})L_{\tau, h}(P_{\tau}\otimes P_{h})$,  or a rediscretization with coarser space and/or time steps, in which case $L_{2\tau, 2h} = I_{2\tau}\otimes Q_{2\tau,2h}+\Gamma_{-1,2\tau}\otimes B_{2\tau,2h}$.
    In our analysis, we will consider rediscretization.

    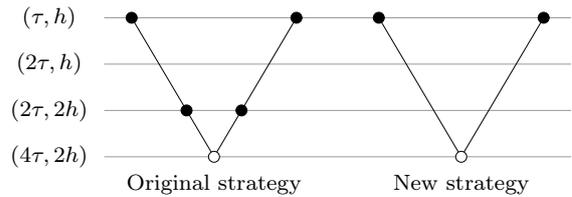
\begin{figure}
        \centering
        \resizebox{0.9\columnwidth}{!}{
            \begin{tikzpicture}
                \def\h{1.7}; 
                \def\k{1}; 
                \def\txt{2.7}; 

                \node[scale=\txt] at (-2, 3*\h) {$(\tau, h)$};
                \node[scale=\txt] at (-2, 2*\h) {$(2\tau, h)$};
                \node[scale=\txt] at (-2, 1*\h) {$(2\tau, 2h)$};
                \node[scale=\txt] at (-2, 0*\h) {$(4\tau, 2h)$};

                \draw[black!40] (0, 0*\h) -- (17*\k, 0*\h); 
                \draw[black!40] (0, 1*\h) -- (17*\k, 1*\h); 
                \draw[black!40] (0, 2*\h) -- (17*\k, 2*\h); 
                \draw[black!40] (0, 3*\h) -- (17*\k, 3*\h); 

                \draw (1*\k , 3*\h) -- (4*\k , 0*\h) -- (7*\k , 3*\h); 
                \draw[-latex] (10*\k, 3*\h) -- (13*\k, 0*\h); 
                \draw[-latex] (13*\k, 0*\h) -- (16*\k, 3*\h); 

                \draw[fill=black] (1*\k, 3*\h) circle (0.2cm); 
                \draw[fill=black] (3*\k, 1*\h) circle (0.2cm);
                \draw[fill=white] (4*\k, 0*\h) circle (0.2cm); 
                \draw[fill=black] (5*\k, 1*\h) circle (0.2cm);
                \draw[fill=black] (7*\k, 3*\h) circle (0.2cm); 
                \draw[fill=black] (10*\k, 3*\h) circle (0.2cm); 
                \draw[fill=white] (13*\k, 0*\h) circle (0.2cm); 
                \draw[fill=black] (16*\k, 3*\h) circle (0.2cm); 

                \node[scale=\txt] at (4*\k , -1) {Original strategy}; 
                \node[scale=\txt] at (13*\k, -1) {New strategy};
            \end{tikzpicture}
        }

        \caption{Different coarsening strategies considered in the coarse-grid analysis in Section~\ref{sec:coarse-grid-analysis}.}
        \label{fig:coarsening-strategies}
    \end{figure}

\section{Analysis techniques}
\label{sec:analysis-tech}

    The analysis performed in this article relies on Local Fourier Analysis (LFA), first introduced in~\cite{brandt1977multi} and made rigorous in~\cite{brandt1994rigorous}. 
    For an introduction, we refer the reader to the well written books~\cite[Chapter~3]{trottenberg2000multigrid} and~\cite[Section~8.2]{hackbusch1985multi}.
    To this end, we will assume that our problem is periodic in time and space. 

    First, let us recall the Fourier mode decomposition for the vector $\boldsymbol{u}\in \mathbb{R}^{N_t N_x}$. All the following definitions   
    can be found in \cite{gander2016analysis}.
    \begin{definition}
        Let $\boldsymbol{\Phi}: (-\pi,\pi]^2 \to \mathbb{R}^{N_t N_x}$ be defined by $\Phi_{n,j}(\alpha,\beta) := e^{i n\alpha}e^{i j\beta}$, for $1\leq n\leq N_t$ and $1\leq j \leq N_x$. The vector $\boldsymbol{\Phi}(\theta_t,\theta_x)$ is called the Fourier mode with frequency $(\theta_t,\theta_x)\in\Theta_t\times\Theta_x=:\Theta$ where
        \begin{equation}
        \label{eq:def-freq-domains}
            \begin{aligned}
                \Theta_t &:= \left\{ \frac{2k\pi}{N_t} \: | \: k=1-\frac{N_t}{2}, \dots, \frac{N_t}{2} \right\} \subset (-\pi, \pi]\;, \\
                \Theta_x &:= \left\{ \frac{2k\pi}{N_x} \: | \: k=1-\frac{N_x}{2}, \dots, \frac{N_x}{2} \right\} \subset (-\pi, \pi]\;.
            \end{aligned}
        \end{equation}
    \end{definition}
    \begin{definition}
        Let $\boldsymbol{u}=(\boldsymbol{u}_1, \boldsymbol{u}_2,\dots, \boldsymbol{u}_{N_t})\in\mathbb{R}^{N_tN_x}$.
        For each frequency $(\theta_t,\theta_x)\in\Theta$, the Fourier coefficient $\hat{u}$ associated to $(\theta_t,\theta_x)$ is defined by
        \begin{equation*}
            \hat{u}(\theta_t,\theta_x) := \frac{1}{N_t}\frac{1}{N_x}\sum_{n=1}^{N_t}\sum_{j=1}^{N_x}u_{n,j}\, e^{-i n\theta_t}e^{-i j\theta_x}\;.
        \end{equation*}
        Using these notations, the Fourier mode decomposition of $\boldsymbol{u}$ reads
        \begin{equation}
            \label{eq:Fourier-decomp}
            \boldsymbol{u} = \sum_{\theta_t\in\Theta_t} \sum_{\theta_x\in\Theta_x} \hat{u}(\theta_t,\theta_x) \boldsymbol{\Phi}(\theta_t,\theta_x)\;.
        \end{equation}
    \end{definition}

    The space-time frequency domain $\Theta$ can be decomposed as the disjoint union of low frequency and high frequency domains $\Theta=\Theta^{low}\cup\Theta^{high}$. The definitions of these low/high frequency domains depend on the type of coarsening considered, 
    \begin{itemize}
        \item semi-coarsening in time:\\
        $\Theta^{low} = \Theta \cap (-\pi/2, \pi/2]\times (-\pi,\pi]$;
        \item semi-coarsening in space:\\
        $\Theta^{low} = \Theta \cap (-\pi,\pi]\times (-\pi/2, \pi/2]$;
        \item full space-time coarsening:\\
        $\Theta^{low} = \Theta \cap (-\pi/2, \pi/2]\times (-\pi/2,\pi/2]$;
        \item new coarsening strategy:\\
        $\Theta^{low} = \Theta \cap (-\pi/4, \pi/4]\times (-\pi/2,\pi/2]$.
    \end{itemize}
    In each case, we directly deduce $\Theta^{high}=\Theta\setminus\Theta^{low}$, see Figure \ref{fig:freqs}.

    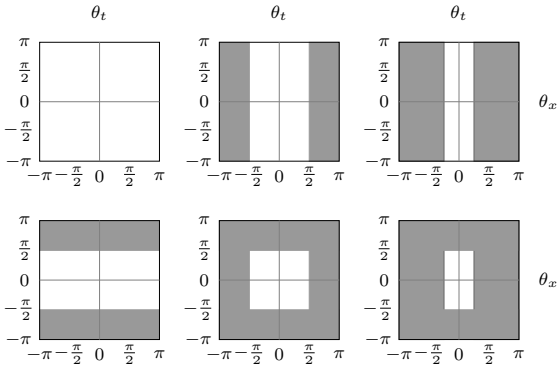
\begin{figure}
        \centering
        \resizebox{0.9\columnwidth}{!}{
            \begin{tikzpicture}
                \def\side{4}; 
                \def\spc{6}; 
                \def\txt{2}; 

                \node[scale=\txt] at (\side/2,       2*\spc-1) {$\theta_t$};
                \node[scale=\txt] at (\spc+\side/2,  2*\spc-1) {$\theta_t$}; 
                \node[scale=\txt] at (2*\spc+\side/2,2*\spc-1) {$\theta_t$}; 
                \node[scale=\txt] at (3*\spc-1, 2*\side) {$\theta_x$}; 
                \node[scale=\txt] at (3*\spc-1, \side/2) {$\theta_x$}; 

                \foreach \j in {0, 1} {

                    \draw[fill=black!40, draw=black!40] (0+1*\spc, 0+\j*\spc) -- (\side/4+1*\spc, 0+\j*\spc) -- (\side/4+1*\spc, \side+\j*\spc) -- (0+1*\spc, \side+\j*\spc) -- cycle; 
                    \draw[fill=black!40, draw=black!40] (3*\side/4+1*\spc, 0+\j*\spc) -- (\side+1*\spc, 0+\j*\spc) -- (\side+1*\spc, \side+\j*\spc) -- (3*\side/4+1*\spc, \side+\j*\spc) -- cycle; 

                    \draw[fill=black!40, draw=black!60] (0+2*\spc, 0+\j*\spc) -- (3*\side/8+2*\spc, 0+\j*\spc) -- (3*\side/8+2*\spc, \side+\j*\spc) -- (0+2*\spc, \side+\j*\spc) -- cycle; 
                    \draw[fill=black!40, draw=black!60] (5*\side/8+2*\spc, 0+\j*\spc) -- (\side+2*\spc, 0+\j*\spc) -- (\side+2*\spc, \side+\j*\spc) -- (5*\side/8+2*\spc, \side+\j*\spc) -- cycle; 
                    
                }

                \foreach \i in {0, 1, 2}{
                    \draw[fill=black!40, draw=black!40] (0+\i*\spc, 0+0*\spc) -- (\side+\i*\spc, 0+0*\spc) -- (\side+\i*\spc, \side/4+0*\spc) -- (0+\i*\spc, \side/4+0*\spc) -- cycle; 
                    \draw[fill=black!40, draw=black!40] (0+\i*\spc, 3*\side/4+0*\spc) -- (\side+\i*\spc, 3*\side/4+0*\spc) -- (\side+\i*\spc, \side+0*\spc) -- (0+\i*\spc, \side+0*\spc) -- cycle; 
                }

                \foreach \i in {0, 1, 2}{
                    \foreach \j in {0, 1}{
                        \draw (0+\i*\spc, 0+\j*\spc) -- (\side+\i*\spc, 0+\j*\spc) -- (\side+\i*\spc, \side+\j*\spc) -- (0+\i*\spc, \side+\j*\spc) -- cycle; 

                        \draw[black!50] (\side/2+\i*\spc, 0+\j*\spc) -- (\side/2+\i*\spc, \side+\j*\spc); 
                        \draw[black!50](0+\i*\spc, \side/2+\j*\spc) -- (\side+\i*\spc, \side/2+\j*\spc); 

                        \node[scale=\txt] at (0+\i*\spc, 0+\j*\spc-0.5) {$-\pi$}; 
                        \node[scale=\txt] at (\side/4+\i*\spc, 0+\j*\spc-0.5) {$-\frac{\pi}{2}$}; 
                        \node[scale=\txt] at (\side/2+\i*\spc, 0+\j*\spc-0.5) {$0$};
                        \node[scale=\txt] at (3*\side/4+\i*\spc, 0+\j*\spc-0.5) {$\frac{\pi}{2}$}; 
                        \node[scale=\txt] at (\side+\i*\spc, 0+\j*\spc-0.5) {$\pi$};

                        \node[scale=\txt] at (0+\i*\spc-0.7, 0+\j*\spc) {$-\pi$}; 
                        \node[scale=\txt] at (0+\i*\spc-0.7, \side/4+\j*\spc) {$-\frac{\pi}{2}$}; 
                        \node[scale=\txt] at (0+\i*\spc-0.5, \side/2+\j*\spc) {$0$};
                        \node[scale=\txt] at (0+\i*\spc-0.5, 3*\side/4+\j*\spc) {$\frac{\pi}{2}$}; 
                        \node[scale=\txt] at (0+\i*\spc-0.5, \side+\j*\spc) {$\pi$};
                    }
                }
            \end{tikzpicture}
        }
        \caption{High frequencies $\Theta^{high}$ (in grey). Top row: no coarsening in space. Bottom row: factor 2 coarsening in space. Left column: no coarsening in time. Middle column: factor 2 coarsening in time. Right column: factor 4 coarsening in time.}
        \label{fig:freqs}
    \end{figure}

\section{Analysis of the smoother}
\label{sec:smoother}

    In this section, we begin with the analysis for semi-coarsening in space and time, and we deduce the optimal damping parameter $\omega^*$ for full space-time coarsening as well as for our new coarsening strategy (see Figure~\ref{fig:coarsening-strategies}). Then, for the full space-time and new coarsening strategies, we compare the performance of the smoother with our optimized damping parameter to the one with the original choice $\omega^* = 0.5$ made in~\cite{gander2016analysis}. 
    

    To compute the optimal damping parameter, we first need to consider the smoothing factor defined as 
    \begin{equation}
        \label{eq:smoothing-factor}
        \mu_S(\omega) = \max\{|\hat{S}_{\tau, h}(\omega; \theta_{t}, \theta_{x})|; (\theta_t, \theta_x)\in \Theta^{high}\} \;,
    \end{equation}
    where $\Theta^{high}$ is the set of high frequencies defined in Section~\ref{sec:analysis-tech} and $\hat{S}_{\tau,h}$ is the Fourier symbol associated to~\eqref{eq:def-smoother}. 
    Thus, computing $\mu_S$ is equivalent to finding the modes $(\theta_{t}^{*}, \theta_{x}^{*})\in \Theta^{high}$ that degrade the smoothing factor the most, i.e., that maximize $|\hat{S}_{\tau, h}(\omega; \theta_x, \theta_t)|$.
    Now, given an expression of $\mu_S$, the optimal damping parameter is obtained by solving the optimization problem
    \begin{equation}
        \label{eq:min-smoothing-factor}
        \omega^* = \argmin_{\omega\in(0,1]} \mu_S(\omega)  \;.
    \end{equation}    

    
        
 
    In the case of a discretization using centered finite differences in space and a generic Runge Kutta method in time, the Fourier coefficient $\hat{S}_{\tau,h}$ of~\eqref{eq:def-smoother} is given by 
    \begin{equation}
    \label{eq:FT-smoother}
        \hat{S}_{\tau, h}(\omega; \theta_{t}, \theta_{x}) = 1 - \omega + \omega e^{-i\theta_t}R(z(\theta_x))\;,
    \end{equation}
    where $z(\theta_x)$ is the Fourier coefficient of the matrix $L_h$ in~\eqref{eq:heat-discretized-in-space} scaled by $\tau$.
    To ease computations, we consider
    \begin{multline} \label{eq:s-squared}
        |\hat{S}_{\tau, h}(\omega; \theta_{t}, \theta_{x})|^{2} = (1 - \omega)^2 + 2\omega(1 - \omega) \cos\theta_t R(z(\theta_x)) \\+ \omega^2 R^2(z(\theta_x))\;.  
    \end{multline}

    In particular, we will choose $R$ to be the stability function associated to the Backward Euler method, 
        \begin{equation}
        \label{eq:R-euler-imp}
        R(z) = \frac{1}{1-z}\;,
    \end{equation}
    and $z(\theta_x) = 2\tau[\cos\theta_x -1]/h^2 = 2\sigma[\cos\theta_x -1]$ (from a centered spatial discretization).
    
    Denote by $c_x := 1 - 2\sigma[\cos\theta_x - 1]$. Then, the expression~\eqref{eq:s-squared} can then explicitly be written as
    \begin{equation}
        \label{eq:s-squared-euler}
        |\hat{S}_{\tau, h}(\omega; \theta_{t}, \theta_{x})|^{2} = (1 - \omega)^{2} + \frac{2\omega(1 - \omega)\cos\theta_t}{c_x} + \frac{\omega^{2}}{c_x^{2}}\;.
    \end{equation}
    When $\theta_x=\frac{\pi}{2}$, we denote $c_x$ by
    \begin{equation*}
        c:= 1 - 2\sigma[\cos\frac{\pi}{2} - 1]=1+2\sigma\;.
    \end{equation*}
    
    \begin{remark}
        As the term~\eqref{eq:s-squared-euler} is symmetric with respect to $\theta_x$ (due to the symmetry of the spatial discretization) and $\theta_t$, we can restrict our arguments to the high frequencies that lie in the upper right quadrant. 
    \end{remark}

    \begin{lemma}\label{lemma:time-coarsening}
        Let $R$ be the stability function of a Runge-Kutta method such that $R(z)\geq 0$ for all $z\in \mathbb{R}$.
        For $\omega\in(0, 1]$, the time mode that degrades the smoother the most is
        \begin{equation*}
            \theta_t^* = \begin{cases}
                0 & \text{if no time coarsening,}\\ 
                \pi/2 & \text{if factor $2$ time coarsening,} \\ 
                \pi/4 & \text{if factor $4$ time coarsening.}
            \end{cases}
        \end{equation*}
    \end{lemma}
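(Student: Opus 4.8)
The plan is to fix the spatial frequency $\theta_x$ and view the squared amplitude factor~\eqref{eq:s-squared} purely as a function of $\theta_t$, noting that $\theta_t$ enters only through $\cos\theta_t$. Writing $R := R(z(\theta_x))$, the hypothesis guarantees $R \geq 0$, and~\eqref{eq:s-squared} reads $|\hat{S}_{\tau,h}|^2 = (1-\omega)^2 + 2\omega(1-\omega)R\cos\theta_t + \omega^2 R^2$. The crucial observation is that the coefficient of $\cos\theta_t$ equals $2\omega(1-\omega)R$, which is nonnegative: indeed $\omega \in (0,1]$ forces $1-\omega \geq 0$ and $\omega > 0$, while $R \geq 0$ by assumption. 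Hence, for every fixed $\theta_x$, the map $\theta_t \mapsto |\hat{S}_{\tau,h}(\omega;\theta_t,\theta_x)|^2$ is nondecreasing in $\cos\theta_t$, so it is maximized precisely where $\cos\theta_t$ is largest over the admissible range of $\theta_t$.

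It then remains to identify, for each coarsening regime, the time frequency in $\Theta^{high}$ that makes $\cos\theta_t$ largest. By the symmetry remark we may restrict to $\theta_t \in [0,\pi]$, on which $\cos$ is strictly decreasing, so the maximizer is the smallest admissible $\theta_t$. I would now read off the high-frequency time intervals from the definitions in Section~\ref{sec:analysis-tech}. For semi-coarsening in space (no time coarsening) there is no restriction on $\theta_t$, whose smallest value is $\theta_t = 0$. For factor $2$ time coarsening the low modes are $\theta_t \in (-\pi/2,\pi/2]$, so the high time modes in the upper quadrant are $\theta_t \in (\pi/2,\pi]$, with infimum $\pi/2$. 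For factor $4$ time coarsening the low modes are $\theta_t \in (-\pi/4,\pi/4]$, leaving the high time modes $\theta_t \in (\pi/4,\pi]$, with infimum $\pi/4$. In each case this produces the stated $\theta_t^*$.

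I do not expect a genuine obstacle here; the substance is the single monotonicity observation, and the main care is bookkeeping at the boundaries. First, the high-frequency intervals are half-open and exclude their left endpoints $\pi/2$ and $\pi/4$, so on the continuous interval the supremum of $\cos\theta_t$ is approached but not attained; as is standard in LFA, I would take the limiting boundary mode (equivalently, as $N_t \to \infty$ the discrete frequencies in $\Theta_t$ accumulate at the boundary and the worst discrete mode converges to it), which justifies writing $\theta_t^* = \pi/2$ and $\theta_t^* = \pi/4$. Second, the argument degenerates when $\omega = 1$: then $2\omega(1-\omega)R = 0$ and $|\hat{S}_{\tau,h}|^2$ is independent of $\theta_t$, so the claimed $\theta_t^*$ remains a valid (if non-unique) maximizer and the statement still holds. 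Pinning down the sign of the coefficient and handling these two boundary conventions is the only place where attention is required.
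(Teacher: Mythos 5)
Your proof is correct and follows essentially the same route as the paper: isolate the only $\theta_t$-dependent term $2\omega(1-\omega)R(z(\theta_x))\cos\theta_t$, use $\omega\in(0,1]$ and $R\geq 0$ to reduce the problem to maximizing $\cos\theta_t$ over the admissible high time frequencies, and read off the smallest such frequency in each regime. Your extra care at the boundaries (the half-open intervals, where one can also note that $-\pi/2$ and $-\pi/4$ genuinely lie in $\Theta^{high}$ and give the same value by evenness of the symbol in $\theta_t$, and the degenerate case $\omega=1$) goes slightly beyond the paper's proof, which simply treats the ranges as closed intervals, but the substance is identical.
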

    \begin{proof}
        The term~\eqref{eq:s-squared} can be separated into three terms, the first and third do not depend on $\theta_t$ and are positive. We only need to maximize the second term 
        \begin{equation*}
            2\omega (1 - \omega)\cos\theta_t\,R(z(\theta_x)) \;.
        \end{equation*} 
        As $R(z)\geq 0$, we only need to focus on maximizing $\cos\theta_t$. For $\theta_t\in[0, \pi]$ (no coarsening), it is obtained for $\theta_t^*=0$. For $\theta_t\in[\pi/2, \pi]$ (factor $2$ coarsening in time), we have that $\theta_t^*=\pi/2$. For $\theta_t\in[\pi/4, \pi]$ (factor $4$ coarsening in time), we have that $\theta_t^*=\pi/4$.
    \end{proof}

    \begin{lemma}\label{lemma:space-coarsening}
        Let $R$ be the stability function of a Runge-Kutta method such that $R$ is a monotonically increasing function.
        For $\omega\in(0, 1]$, the spatial mode that degrades the smoother the most is the one that maximizes $z(\theta_x)$.
    \end{lemma}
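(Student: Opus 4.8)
The plan is to use that the squared symbol \eqref{eq:s-squared} depends on the spatial frequency $\theta_x$ only through the single scalar $r:=R(z(\theta_x))$. Fixing $\omega$ and $\theta_t$, I would write
\begin{equation*}
    |\hat{S}_{\tau,h}(\omega;\theta_t,\theta_x)|^2 = (1-\omega)^2 + 2\omega(1-\omega)\cos\theta_t\, r + \omega^2 r^2 =: g(r),
\end{equation*}
so that the dependence on $\theta_x$ is entirely channelled through $r$. Since $R$ is monotonically increasing, maximizing $r$ as $\theta_x$ varies is the same as maximizing $z(\theta_x)$. Hence the whole statement reduces to proving that $g$ is nondecreasing on the range of values attained by $r$.

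To establish this, I would mimic the term-by-term argument used in the proof of Lemma~\ref{lemma:time-coarsening} and inspect the three summands of $g$ as functions of $r$. The first is constant; the third, $\omega^2 r^2$, is nondecreasing in $r$ whenever $r\geq 0$; and the second, $2\omega(1-\omega)\cos\theta_t\,r$, is nondecreasing in $r$ as soon as its coefficient is nonnegative. For $\omega\in(0,1]$ one has $1-\omega\geq 0$ and $\omega>0$, so the sign of that coefficient is the sign of $\cos\theta_t$. At the time modes that actually degrade the smoother, namely $\theta_t^*\in\{0,\pi/4,\pi/2\}$ from Lemma~\ref{lemma:time-coarsening}, we have $\cos\theta_t^*\geq 0$, and for the Runge-Kutta stability functions under consideration $r=R(z(\theta_x))\geq 0$. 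All three terms are therefore nondecreasing in $r$, so $g$ is nondecreasing and its maximum over $\theta_x$ is attained where $r$, equivalently $z(\theta_x)$, is largest.

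Equivalently, and perhaps more transparently, $g$ is an upward parabola in $r$ whose vertex sits at $r_v=-(1-\omega)\cos\theta_t/\omega\leq 0$ under the same sign conditions; since the admissible values of $r$ are nonnegative, the entire range lies to the right of the vertex, where $g$ is increasing. Either route yields $\partial g/\partial r = 2\omega[(1-\omega)\cos\theta_t+\omega r]\geq 0$, which is the crux.

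I expect the main obstacle to be precisely this sign bookkeeping: guaranteeing that $g$ is monotone on the relevant range rather than merely quadratic. The restriction $\omega\in(0,1]$ (so that $1-\omega\geq 0$) and the nonnegativity of $\cos\theta_t$ at the worst time frequencies are both essential, and one also needs $R\geq 0$ there, consistent with the standing hypothesis of Lemma~\ref{lemma:time-coarsening}. Were $\cos\theta_t$ allowed to be negative, the vertex $r_v$ could move into the admissible range, and the spatial maximizer would in general no longer coincide with the maximizer of $z(\theta_x)$.
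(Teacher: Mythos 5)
Your proposal is correct and follows essentially the same route as the paper's proof: a term-by-term inspection of \eqref{eq:s-squared}, using $\cos\theta_t^*\geq 0$ from Lemma~\ref{lemma:time-coarsening} together with the monotonicity of $R$ to conclude that maximizing the symbol amounts to maximizing $z(\theta_x)$. Your version merely makes explicit the sign bookkeeping (in particular the need for $R\geq 0$, which the paper leaves implicit by invoking Lemma~\ref{lemma:time-coarsening}) and adds the equivalent parabola-vertex formulation, which is a welcome clarification but not a different argument.
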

    \begin{proof}
        Proceeding as previously, we look at the terms in~\eqref{eq:s-squared}, the first one is a constant, the second one depends on $\theta_t$ and $\theta_x$ and the third one depends on $\theta_x$. 
        By Lemma~\ref{lemma:time-coarsening}, $\cos\theta_t^*\geq 0$, thus in order to maximize the term~\eqref{eq:s-squared}, the term $R(z(\theta_x))$ needs to be maximized. Since $R$ is a monotonically increasing function, we need to maximize $z(\theta_x)$.
    \end{proof}
    
    


    \begin{lemma}\label{lemma:optimal-modes}
        Let us consider the Backward Euler method whose stability function is given by \eqref{eq:R-euler-imp}. Then we are able to compute the modes $(\theta_t^*, \theta_x^*)$ for which the term~\eqref{eq:s-squared-euler} is the largest for each of the following coarsening strategies.
        \begin{enumerate}        
            \item[(i)] When considering semi-coarsening in time,
            \begin{equation*}
                (\theta_t^*, \theta_x^*) = (\pi/2,0) \;.   
            \end{equation*}
            \item[(ii)] When considering semi-coarsening in space,
            \begin{equation*}
                (\theta_t^*, \theta_x^*) = (0,\pi/2) \;.   
            \end{equation*}
            \item[(iii)] When considering full space-time coarsening, 
            \begin{equation*}
                (\theta_t^*, \theta_x^*) = \begin{cases}
                    (0,\pi/2) & \text{if} \quad (c,\omega) \in \mathcal{D}_1,\\ 
                    (\pi/2,0) & \text{otherwise,}
                \end{cases}
            \end{equation*}
            where the region $\mathcal{D}_1\subset (1,+\infty)\times (0,1]$ is defined by
            \begin{equation*}
                \mathcal{D}_1 := \left\{ c\in [1,+\infty), \: 0 < \omega \leq \frac{2c}{c^2 + 2c - 1} \right\}\;.
            \end{equation*}
            \item[(iv)] When considering the new coarsening strategy, 
            \begin{equation*}
                (\theta_t^*, \theta_x^*) = \begin{cases}
                    (0,\pi/2) & \text{if} \quad (c,\omega) \in \mathcal{D}_2,\\ 
                    (\pi/4,0) & \text{otherwise,}
                \end{cases}
            \end{equation*}
            where the region $\mathcal{D}_2\subset (1,+\infty)\times (0,1]$ is defined by
            \begin{equation*}
                \mathcal{D}_2 := \left\{ c\in [1,\sqrt2], \:0 < \omega \leq \frac{\sqrt{2}c^2-2c}{(\sqrt2-1)c^2 - 2c + 1} \right\}\;.
            \end{equation*}
        \end{enumerate}
        \end{lemma}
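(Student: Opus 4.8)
The plan is to exploit Lemmas~\ref{lemma:time-coarsening} and~\ref{lemma:space-coarsening} to reduce the two–dimensional maximization of~\eqref{eq:s-squared-euler} over $\Theta^{high}$ to the comparison of its values at a few explicit candidate modes. Two structural facts drive everything. First, by the Remark preceding the lemma we may restrict to the upper right quadrant $(\theta_t,\theta_x)\in[0,\pi]^2$. Second, since $R(z)=1/c_x\ge 0$ and $c_x=1+2\sigma(1-\cos\theta_x)$ is increasing in $\theta_x$ on $[0,\pi]$, once $\theta_t$ is fixed the expression~\eqref{eq:s-squared-euler} is a monotone function of $c_x$; hence its extremum in the spatial variable is attained at an endpoint of the admissible $\theta_x$-range. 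I would record at the outset the two evaluations needed repeatedly: at $\theta_t=0$ the term~\eqref{eq:s-squared-euler} collapses to the perfect square $[(1-\omega)+\omega/c_x]^2$, while at $\theta_t=\pi/2$ the middle term vanishes, leaving $(1-\omega)^2+\omega^2/c_x^2$.

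For parts (i) and (ii) this gives the result immediately. For semi–coarsening in time the high–frequency range is $\theta_t\in[\pi/2,\pi]$, $\theta_x\in[0,\pi]$; Lemma~\ref{lemma:time-coarsening} forces $\theta_t^*=\pi/2$, the middle term drops out, and minimizing $c_x$ over the full spatial range gives $\theta_x^*=0$ (i.e.\ $c_x=1$). For semi–coarsening in space the range is $\theta_t\in[0,\pi]$, $\theta_x\in[\pi/2,\pi]$; Lemma~\ref{lemma:time-coarsening} gives $\theta_t^*=0$, so~\eqref{eq:s-squared-euler} becomes the decreasing perfect square in $c_x$, whose maximum over $\theta_x\in[\pi/2,\pi]$ is at the smallest $c_x$, namely $\theta_x^*=\pi/2$.

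For the full and new coarsening strategies (parts (iii)–(iv)) the high–frequency set is L–shaped, and I would split it as Region~A $=\{\theta_x\in[\pi/2,\pi],\ \theta_t\in[0,\pi]\}$ and Region~B $=\{\theta_x\in[0,\pi/2],\ \theta_t\in[\pi/2,\pi]\}$ (resp.\ $\theta_t\in[\pi/4,\pi]$ for the factor–$4$ case), whose union is exactly $\Theta^{high}$. On Region~A, Lemma~\ref{lemma:time-coarsening} (no time coarsening) gives $\theta_t^*=0$ and the perfect square is minimized in $c_x$ at $\theta_x=\pi/2$, producing the candidate value $V_A=[(1-\omega)+\omega/c]^2$ at $(0,\pi/2)$. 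On Region~B, Lemma~\ref{lemma:time-coarsening} gives $\theta_t^*=\pi/2$ (resp.\ $\pi/4$), and monotonicity in $c_x$ pushes $\theta_x$ to $0$; this yields $V_B=(1-\omega)^2+\omega^2$ at $(\pi/2,0)$ for (iii), and $V_B=(1-\omega)^2+\sqrt2\,\omega(1-\omega)+\omega^2$ at $(\pi/4,0)$ for (iv). The global maximizer is then whichever of $(0,\pi/2)$ and $(\pi/2,0)$ (resp.\ $(\pi/4,0)$) gives the larger value, so the claim reduces to determining the sign of $V_A-V_B$.

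The remaining, and most delicate, step is the algebraic reduction of $V_A\ge V_B$. Expanding and cancelling $(1-\omega)^2$, dividing by $\omega>0$ and clearing the factor $c^2>0$, part (iii) gives $2c\ge\omega(c^2+2c-1)$, i.e.\ $\omega\le 2c/(c^2+2c-1)$ (the denominator being positive since $c>1$), which is exactly the defining inequality of $\mathcal{D}_1$. Part (iv) leads to $\omega\big[(\sqrt2-1)c^2-2c+1\big]\ge \sqrt2\,c^2-2c$. Here I expect the real care to be needed: the bracketed coefficient of $\omega$ is \emph{negative} on the relevant range of $c$, so dividing by it reverses the inequality and produces the upper bound $\omega\le(\sqrt2\,c^2-2c)/((\sqrt2-1)c^2-2c+1)$ of $\mathcal{D}_2$; moreover the right–hand side $\sqrt2\,c^2-2c=c(\sqrt2\,c-2)$ changes sign at $c=\sqrt2$, and for $c>\sqrt2$ the inequality $V_A\ge V_B$ becomes infeasible for every $\omega\in(0,1]$, which is precisely why $\mathcal{D}_2$ is restricted to $c\in[1,\sqrt2]$ and the mode $(\pi/4,0)$ otherwise dominates. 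Tracking these two sign conditions consistently is the main obstacle; the rest is bookkeeping.
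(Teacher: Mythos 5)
Your proposal is correct and takes essentially the same route as the paper: split the L-shaped high-frequency set into the time-semi-coarsening and space-semi-coarsening regions, use Lemmas~\ref{lemma:time-coarsening} and~\ref{lemma:space-coarsening} to reduce each region to a single candidate mode ($(\pi/2,0)$ or $(\pi/4,0)$ versus $(0,\pi/2)$), and decide which dominates by solving $|\hat{S}_{\tau,h}(\omega;\theta_t^*,0)|^2 \leq |\hat{S}_{\tau,h}(\omega;0,\pi/2)|^2$ for $\omega$, which yields exactly $\mathcal{D}_1$ and $\mathcal{D}_2$. Your explicit algebra---including the sign reversal caused by the negative coefficient $(\sqrt2-1)c^2-2c+1$ and the infeasibility for $c>\sqrt2$---is precisely the step the paper compresses into ``solving this inequality'' and ``some technical computations,'' and it checks out.
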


    \begin{proof}
        \textit{(i)} If no coarsening in space is used, the high frequency spatial modes are in the set $[0, \pi]$. Thus by Lemma~\ref{lemma:space-coarsening}, we need to maximize $z(\theta_x) = 2\sigma[\cos\theta_x - 1]$
        which is achieved for $\theta_x^* = 0$, and we get from Lemma~\ref{lemma:time-coarsening} that $\theta_x^* = \pi/2$. \\
        \textit{(ii)} In the case of semi-coarsening in space, this maximum is achieved for $\theta_x^* = \pi/2\in [\pi/2, \pi]$. Besides, Lemma~\ref{lemma:time-coarsening} yields $\theta_t^*=0$. \\
        \textit{(iii)} In the case of full space-time coarsening, we can see the domain of high frequencies as the union of the high frequencies associated to semi-coarsening in time and to semi-coarsening in space (see Figure~\ref{fig:coarsening-strategies}). Thus, depending on the values of $\omega$ and $\sigma$, the modes $(\theta_t^*, \theta_x^*)$ that degrade the smoother the most are either those for semi-coarsening in time $(\pi/2,0)$ (we say that \textit{time dominates}) or those for semi-coarsening in space $(0,\pi/2)$ (we say that \textit{space dominates}).
        More specifically, we have that space dominates for values of $\omega\in (0, 1]$ such that 
        \begin{equation*}
            |\hat{S}_{\tau, h}(\omega; \pi/2, 0)|^2 \leq |\hat{S}_{\tau, h}(\omega; 0, \pi/2)|^2\;.
        \end{equation*}
        Solving this inequality for $\omega$ and $\sigma$ leads to $(c,\omega)\in\mathcal{D}_1$.
        In Figure~\ref{fig:why-intersection} (top), this region $\mathcal{D}_1$ where space dominates is represented in orange, and the region where time dominates is represented in blue. The previous inequality becomes an equality when $\omega=\frac{2c}{c^2 + 2c - 1}$, see the grey curve in Figure~\ref{fig:why-intersection}. \\
        \textit{(iv)} For the new coarsening strategy, we need to compute the optimal modes for semi-coarsening in time with a factor of $4$. It is given by $(\theta_t^*, \theta_x^*)=(\pi/4, 0)$ using Lemmas~\ref{lemma:time-coarsening} and~\ref{lemma:space-coarsening}. So this time, we have that $(\theta_t^*, \theta_x^*)=(\pi/4, 0)$ when time dominates and $(0,\pi/2)$ when space dominates.
        As in the previous case, we know that space dominates for $\omega\in(0, 1]$ such that
        \begin{equation*}
            |\hat{S}_{\tau, h}(\omega; \pi/4, 0)|^2 \leq |\hat{S}_{\tau, h}(\omega; 0, \pi/2)|^2\;.
        \end{equation*}
        Here, the solution is given by $(c,\omega)\in\mathcal{D}_2$, see Figure~\ref{fig:why-intersection} (bottom), and some technical computations show that equality is reached for $\omega=\frac{\sqrt{2}c^2-2c}{(\sqrt2-1)c^2 - 2c + 1}$.
    \end{proof}

    \begin{figure}
        \centering
        \includegraphics[width=\columnwidth]{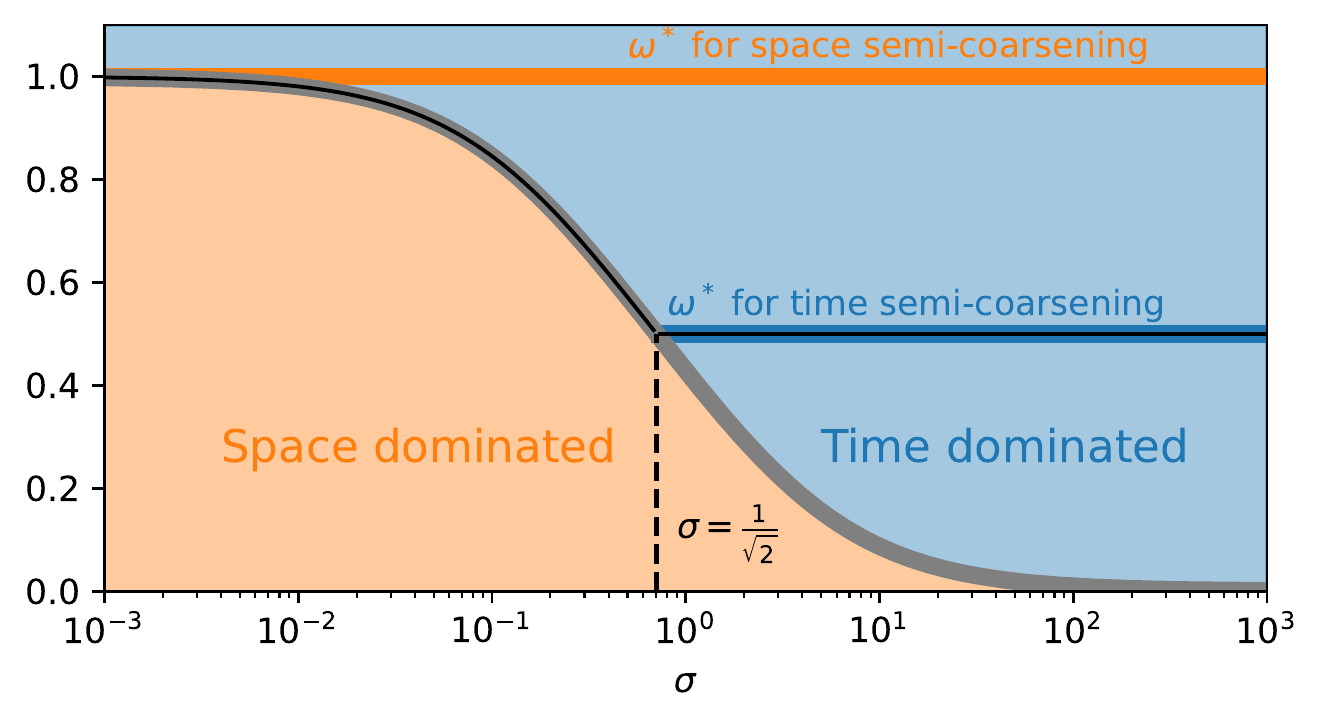}
        \includegraphics[width=\columnwidth]{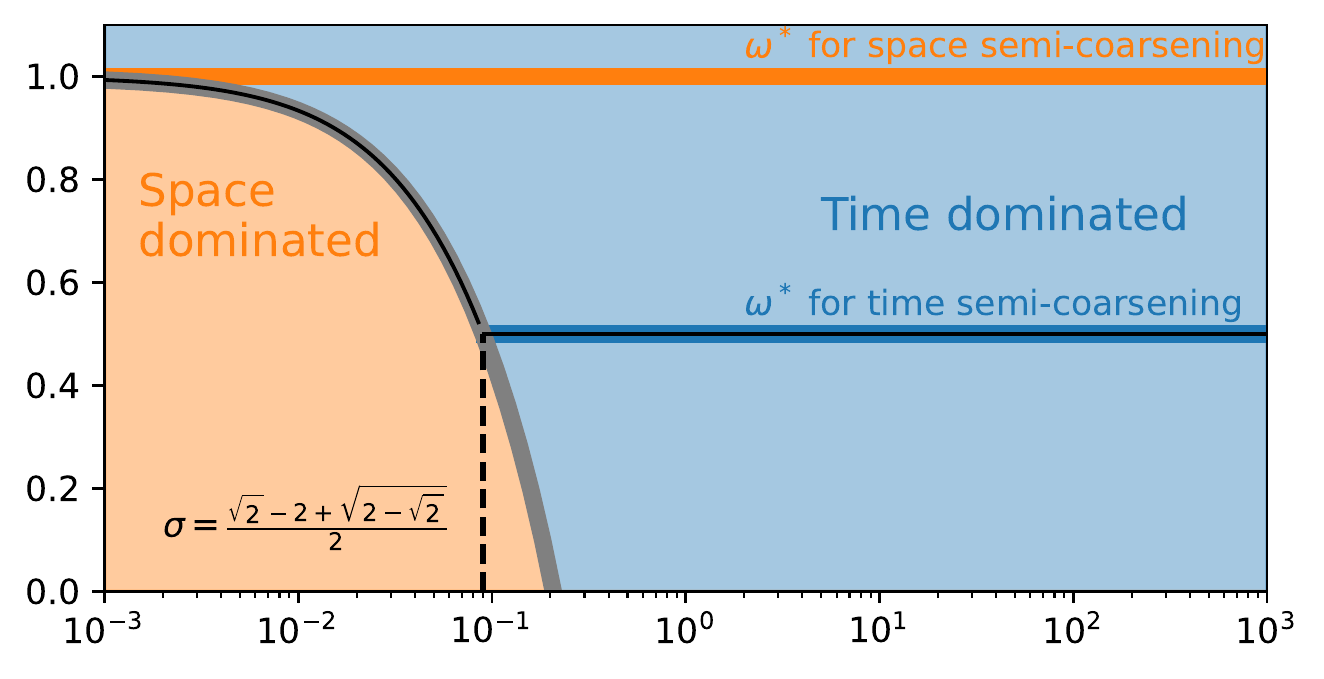}
        \caption{Illustration of how to choose $\omega^*$. Top: for the original strategy, bottom: for the new one. In orange and blue, we can observe the domains where space dominates over time and conversely. The continuous lines show the optimal value for each of the regimes. The regions $\mathcal{D}_{1}$ and $\mathcal{D}_{2}$ from Lemma~\ref{lemma:optimal-modes} correpond to regions in orange (where space dominates). }
        \label{fig:why-intersection}
    \end{figure}

    Now that we have found the pairs $(\theta_t^*,\theta_x^*)$ for each coarsening strategy, we are able to compute $\mu_S$ in each case. The next step is to figure out the optimal values for the damping parameter $\omega$ by solving \eqref{eq:min-smoothing-factor}.
    
    \begin{theorem}\label{thm:optimal-w}
        Consider the heat equation~\eqref{eq:continuous-heat-equation} discretized using a second order centered finite difference scheme in space and the Backward Euler method in time, which we solve with the Space-Time Multigrid algorithm. Then, we are able to compute the optimal damping parameter $\omega^*$ for the following coarsening strategies. 
        \begin{enumerate}
            \item[(i)] When considering  semi-coarsening in time, 
            \begin{equation*}
                \omega^{*} = \frac{1}{2}\;.
            \end{equation*}
            \item[(ii)] When considering semi-coarsening in space, 
            \begin{equation*}
                \omega^{*} = 1\;.
            \end{equation*}
            \item[(iii)] When considering full space-time coarsening, 
            \begin{equation*}
                \omega^{*} = 
                \begin{cases}
                    \frac{1}{2} & \text{if}\quad \sigma >\frac{1}{\sqrt{2}}\\
                    \frac{2c}{c^2+2c-1} & \text{otherwise.}
                \end{cases}
            \end{equation*}
            \item[(iv)] When considering the new coarsening strategy, 
            \begin{equation*}
                \omega^{*} = 
                \begin{cases}
                    \frac{1}{2} & \text{if}\quad \sigma > \frac{\sqrt2-2+\sqrt{2-\sqrt2}}{2}\\ 
                    \frac{\sqrt2 c^2-2c}{(\sqrt2-1)c^2 -2c +1}& \text{otherwise.}
                \end{cases}
            \end{equation*}
        \end{enumerate}
    \end{theorem}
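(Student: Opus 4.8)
The plan is to reduce the minimization \eqref{eq:min-smoothing-factor} to an explicit one-dimensional problem in $\omega$ by exploiting Lemma~\ref{lemma:optimal-modes}, which already pins down the maximizing frequency $(\theta_t^*,\theta_x^*)$ for each strategy. Since $t\mapsto t^2$ is increasing on $[0,\infty)$ and commutes with both the maximum over $\Theta^{high}$ and the $\argmin$ over $\omega$, it is equivalent to minimize $\mu_S(\omega)^2$, and I would work throughout with the squared modulus \eqref{eq:s-squared-euler} to avoid square roots.

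For \textit{(i)} and \textit{(ii)} the maximizing mode is unique and independent of $\omega$. Substituting $(\theta_t^*,\theta_x^*)=(\pi/2,0)$ into \eqref{eq:s-squared-euler}, where $c_x=1$, gives $\mu_S(\omega)^2=(1-\omega)^2+\omega^2$, a strictly convex parabola minimized at $\omega^*=1/2$. For \textit{(ii)}, substituting $(0,\pi/2)$, where $c_x=c$ and $\cos\theta_t=1$, yields the perfect square $\mu_S(\omega)^2=(1-\omega(1-1/c))^2$; since $c=1+2\sigma>1$ we have $1-1/c\in(0,1)$, so this is positive and strictly decreasing on $(0,1]$ and the minimum is attained at the right endpoint $\omega^*=1$.

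The two remaining cases are the crux, and both follow the same envelope argument. By Lemma~\ref{lemma:optimal-modes}\textit{(iii)}--\textit{(iv)}, for every $\omega$ the maximum over $\Theta^{high}$ is realized by one of two competing modes, so $\mu_S(\omega)^2=\max\{L(\omega),C(\omega)\}$, where $L(\omega)=(1-\omega(1-1/c))^2$ is the convex, decreasing space-dominated branch and $C(\omega)$ is the time-dominated branch: $C(\omega)=(1-\omega)^2+\omega^2$ for factor-$2$ coarsening, and $C(\omega)=(1-\omega)^2+\sqrt2\,\omega(1-\omega)+\omega^2$ for factor-$4$ coarsening. A short computation shows each $C$ is strictly convex with its unique minimum at $\omega=1/2$ (notably, the $\cos(\pi/4)=1/\sqrt2$ weighting still produces $1/2$). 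The two branches coincide exactly on the boundary curves defining $\mathcal{D}_1$ and $\mathcal{D}_2$; below the threshold $\omega_0$ space dominates ($L\geq C$) and above it time dominates. I would then argue that the upper envelope of a decreasing convex function and a convex function with interior minimum at $1/2$ is minimized either at $\omega=1/2$, when $\omega_0\leq 1/2$, or at $\omega=\omega_0$ otherwise.

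It remains to translate the comparison between $\omega_0$ and $1/2$ into the stated conditions on $\sigma$. For \textit{(iii)}, $\omega_0=\frac{2c}{c^2+2c-1}\leq\frac12$ is equivalent to $c^2-2c-1\geq 0$, i.e. $c\geq 1+\sqrt2$, and recalling $c=1+2\sigma$ this is exactly $\sigma\geq 1/\sqrt2$. For \textit{(iv)}, setting $\omega_0=1/2$ reduces, after clearing denominators, to the quadratic $(\sqrt2+1)c^2-2c-1=0$, whose positive root re-expressed through $c=1+2\sigma$ gives the threshold $\sigma=\frac{\sqrt2-2+\sqrt{2-\sqrt2}}{2}$. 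I expect this last algebraic reduction, solving a quadratic with nested-radical coefficients and simplifying its positive root into the closed form stated, to be the main obstacle; the rest is routine once the upper-envelope structure and the convexity of $C$ with minimizer at $1/2$ are established.
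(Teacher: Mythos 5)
Your proposal is correct and follows essentially the same route as the paper's proof: explicit minimization of the parabolas for cases (i)--(ii), and for (iii)--(iv) the observation that $\mu_S$ is the upper envelope of the decreasing space-dominated branch and the time-dominated parabola with vertex at $\omega=1/2$, so the minimizer is either $1/2$ or the crossing point $\omega_0$ defining $\partial\mathcal{D}_1$, $\partial\mathcal{D}_2$. Your write-up actually supplies a detail the paper leaves implicit, namely the algebraic translation of $\omega_0\lessgtr 1/2$ into the stated $\sigma$-thresholds (including the simplification $(\sqrt2-1)\sqrt{2+\sqrt2}=\sqrt{2-\sqrt2}$ behind case (iv)), which checks out.
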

    \begin{proof}
        \textit{(i)} For semi-coarsening in time, to obtain the optimal values $\omega^*$, we consider $\mu_S$ as defined in~\eqref{eq:smoothing-factor}, then compute $d\mu_S/d\omega$ and solve 
        $d\mu_S/d\omega = 0$. 
        If we coarsen by a factor $2$, 
        \begin{equation*}
            \frac{d\mu_S}{d\omega}(\omega) = -2 + 4\omega = 0 \;,
        \end{equation*}
        where solving for $\omega$ yields $\omega^* = 1/2$.
        Now, if we coarsen by a factor $4$, 
        \begin{equation*}
            \frac{d\mu_S}{d\omega}(\omega) = 2\omega(2 - \sqrt{2}) - (2 - \sqrt{2}) = 0\;,
        \end{equation*}
        where solving for $\omega$ yields again $\omega^* = 1/2$. \\
        %
        \textit{(ii)} In the case of semi-coarsening in space, we have
        \begin{equation*}
            \mu_S(\omega) = |\hat{S}_{\tau,h}(\omega; 0, \pi/2)|^2 = \big(1 + (c^{-1} -1)\omega\big)^2\;.
        \end{equation*}  
        We notice that $\mu_S$ is a parabola with a minimum at $\omega_0 = \frac{c}{c-1}$, which is outside the domain $(0, 1]$. Therefore it is a decreasing function of $\omega$ on that domain, and it follows that $\omega^* = 1$. \\
        \textit{(iii)} In the case of full space-time coarsening, we can observe two regimes (see Lemma~\ref{lemma:optimal-modes} and Figure~\ref{fig:why-intersection}): one where time dominates, and the other one where space dominates. 
        Also, we know that $\omega \mapsto|\hat{S}_{\tau,h}(\omega; 0, \pi/2)|^2$ and $\omega\mapsto|\hat{S}_{\tau,h}(\omega; \pi/2, 0)|^2$ are parabolas attaining their minima on $(0, 1]$ at $\omega_0=1$ and $\omega_0=1/2$.
        Therefore, for a given $\sigma$, there are two possible behaviours for the continuous function $\omega\mapsto\mu_S(\omega)$, see Figure~\ref{fig:why-intersection} (top). First, if $\sigma> 1/\sqrt2$, then $\mu_S$ is decreasing on $[0,1/2]$ and increasing on $[1/2,1]$, thus $\omega^*=1/2$. Second, if $\sigma< 1/\sqrt2$, then $\mu_S$ is decreasing on $[0,\frac{2c}{c^2+2c-1}]$ and increasing on $[\frac{2c}{c^2+2c-1},1]$, thus $\omega^*=\frac{2c}{c^2+2c-1}$. \\
        %
        \textit{(iv)} In the case of the new coarsening strategy, we proceed exactly in the same way since we know that $\omega \mapsto|\hat{S}_{\tau,h}(\omega; 0, \pi/4)|^2$ is also a parabola attaining its minimum at $\omega_0=1/2$. Here again, for a given $\sigma$, there are two distinct cases, see Figure~\ref{fig:why-intersection} (bottom). First, if $\sigma>\frac{\sqrt2-2+\sqrt{2-\sqrt2}}{2}$, then $\mu_S$ is decreasing on the interval $[0,1/2]$ and increasing on the interval $[1/2,1]$, thus $\omega^*=1/2$. Second, if $\sigma< \frac{\sqrt2-2+\sqrt{2-\sqrt2}}{2}$, then $\mu_S$ is decreasing on $[0,\frac{\sqrt2 c^2-2c}{(\sqrt2-1)c^2 -2c +1}]$ and increasing on $[\frac{\sqrt2 c^2-2c}{(\sqrt2-1)c^2 -2c +1},1]$, thus $\omega^*=\frac{\sqrt2 c^2-2c}{(\sqrt2-1)c^2 -2c +1}$. 
        %
    \end{proof}

    \begin{remark}
        It can be checked that the optimal damping parameters for full space-time coarsening as well as for the new strategy belong to $[1/2, 1]$ for $\sigma\in[0, 1/\sqrt{2}]$ and $\sigma\in[0, \frac{\sqrt{2}-2 + \sqrt{2 - \sqrt{2}}}{2}]$ respectively. 
    \end{remark}

    Figures~\ref{fig:optimal-vs-neumuller} and~\ref{fig:optimal-vs-chaudet} show a comparison between the smoothing factors corresponding to the choice of the damping parameter in~\cite{gander2016analysis} ($\omega=1/2$) and the ones proposed in Theorem~\ref{thm:optimal-w} ($\omega=\omega^*$) for the original and the new coarsening strategies. 
    In each case, as expected, choosing $\omega=\omega^*$ leads to a smaller smoothing factor, although the gain does not seem very important for the new strategy.
    In order to better illustrate this gain, we plotted in Figure~\ref{fig:efficiency} the ratio $\frac{\ln[\mu_S(\omega^*)]}{\ln[\mu_S(0.5)]}$ as a function of $\sigma$, which we call the \textit{efficiency}.
    This quantity is an approximation of the ratio $\frac{N^{default}}{N^{optimal}}$ where $N^{default}$ and $N^{optimal}$ are the numbers of smoothing iterations it takes to reach a given error if we take $\omega=1/2$ and $\omega=\omega^*$.
    In Figure~\ref{fig:efficiency}, we see that considering an optimal damping parameter can give up to a factor $2$ efficiency with respect to the value proposed in~\cite{gander2016analysis}. This means that an iteration using the optimal damping parameter would give a similar smoothing of the solution as two iterations using the original one. 
    Additionally, using an optimal damping parameter does not give any overhead: in any case, the value of $\sigma$ needs to be checked at each level to decide if semi-coarsening in time or full space-time coarsening is more appropriate.
    
    \begin{figure}
        \centering
        \begin{subfigure}[t]{\columnwidth}
            \centering
            \includegraphics[width=\textwidth]{./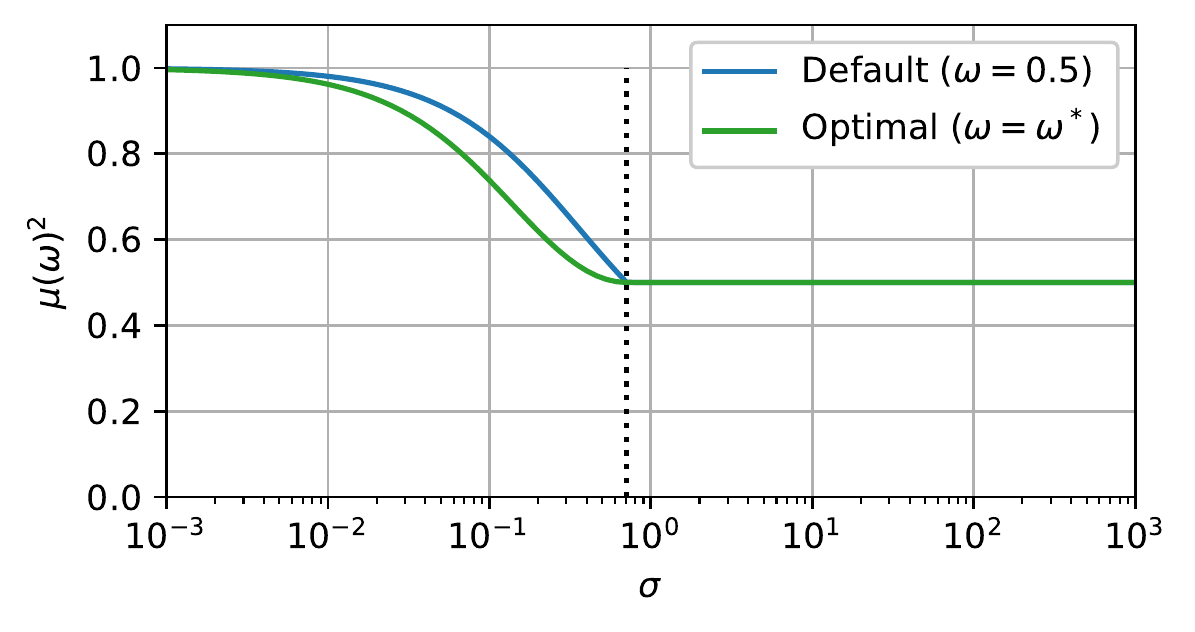}
            \caption{Smoothing factor for the original coarsening strategy}
            \label{fig:optimal-vs-neumuller}
        \end{subfigure}
        \begin{subfigure}[t]{\columnwidth}
            \centering
            \includegraphics[width=\textwidth]{./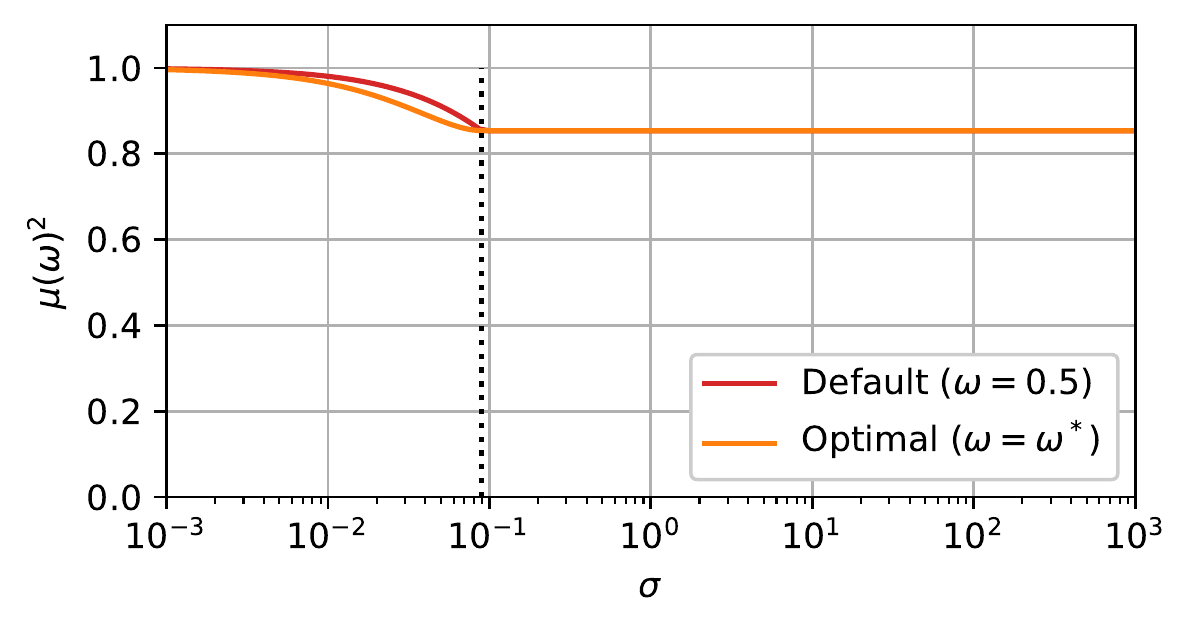}
            \caption{Smoothing factor for the new coarsening strategy}
            \label{fig:optimal-vs-chaudet}
        \end{subfigure}
        \begin{subfigure}[t]{\columnwidth}
            \centering
            \includegraphics[width=\textwidth]{./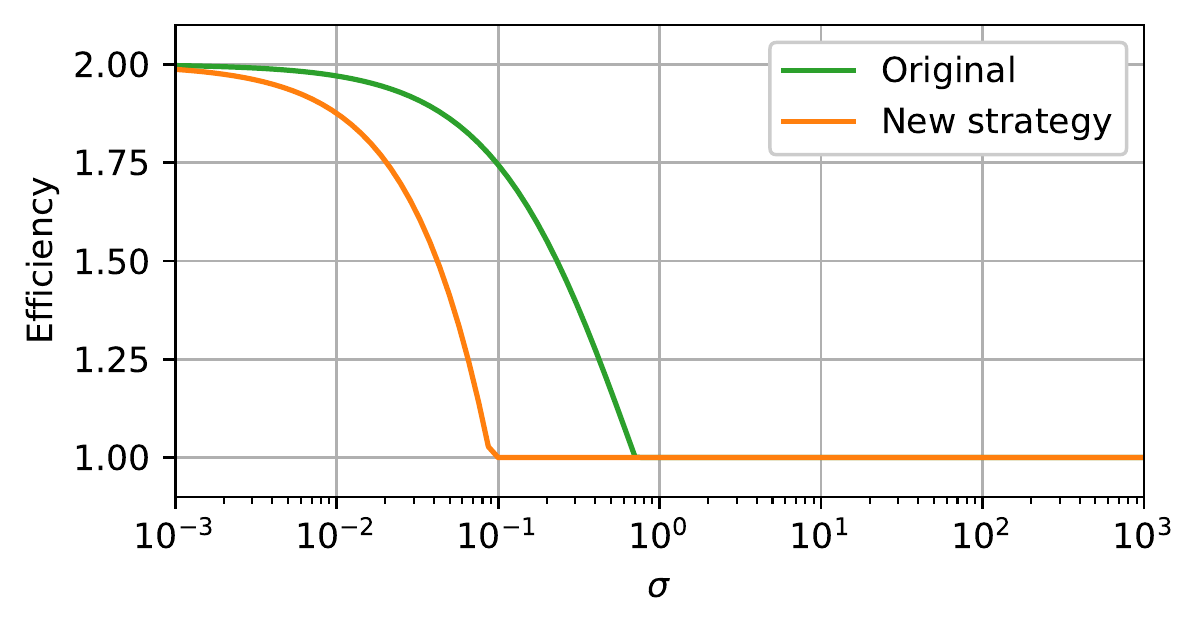}
            \caption{Relative efficiency of the smoother, defined as $\frac{\ln[\mu_S(\omega^*)]}{\ln[\mu_S(0.5)]}$}
            \label{fig:efficiency}
        \end{subfigure}
        
        \caption{Top and middle: smoothing factors for both coarsening strategies with the optimal smoothing parameter $\omega^*$ as given by Theorem~\ref{thm:optimal-w} and with $\omega=0.5$ as suggested by~\cite{gander2016analysis}.
        Bottom: efficiency of the optimized smoother compared to the one proposed in~\cite{gander2016analysis}.}
        \label{fig:optimal-vs-default}
    \end{figure}

\begin{remark}
    Taking a look at the smoothing factors displayed in Figures~\ref{fig:optimal-vs-neumuller} and \ref{fig:optimal-vs-chaudet}, one may think that the full space-time coarsening strategy is much more efficient than the new coarsening strategy. Actually, comparing these values is not really fair as the two strategies do not correspond to the same grids. Indeed, the first one is for coarsening from $(\tau,h)$ to $(2\tau,2h)$ while the second one is for coarsening from $(\tau,h)$ to $(4\tau,2h)$. Therefore $\mu_S$ in Figure~\ref{fig:optimal-vs-chaudet} is computed by taking the maximum over a set of high frequencies $\Theta^{high}$ larger than the one in the case of Figure~\ref{fig:optimal-vs-neumuller}, which explains why the values are significantly higher. 
\end{remark}

\section{Comparison of different multigrid strategies}
\label{sec:coarse-grid-analysis}

In this section, we are interested in the study of Space-Time Multigrid strategies for solving \eqref{eq:continuous-heat-equation}. 
The most natural strategy would be to build a multigrid cycle based on full space-time coarsening. This choice does not alter the geometric properties of the mesh and the gain obtained by each coarsening step is significant in terms of computational cost since both space and time dimensions are divided by two. However, if the ratio $\sigma$ is given by $\sigma^0:=\tau/h^2$ at the finest level, then at the second level it becomes $\sigma=\sigma^0/2$, and at the $n$-th level we end up with $\sigma=\sigma^0/2^{n-1}$.
As illustrated in Figures~\ref{fig:why-intersection} and~\ref{fig:optimal-vs-default}, the smoothing factor $\mu_S$ tends to deteriorate when $\sigma$ gets too small. Thus, if we use too many levels with this strategy, we will end up with very inefficient smoothing steps, which might slow down the whole multigrid algorithm.

For the reasons mentioned above, we thus want to study strategies such that the ratio $\sigma$ remains close to its value $\sigma^0$ at the finest level along all different levels. Especially, we focus on the two strategies presented in Figure~\ref{fig:coarsening-strategies}. In the first one (called \textit{original strategy}), inspired by the alternating strategy introduced by Neumüller and Gander in~\cite[Section 5]{gander2016analysis}, the simplest V-cycle consists of one full space-time coarsening, then one semi-coarsening in time. This leads to a three-level method, starting from the level $(\tau,h)$ and going down to the coarsest level $(4\tau,2h)$. In the second strategy (called \textit{new strategy}), which we present for the first time in this manuscript, the simplest V-cycle consists of a one step coarsening from the level $(\tau,h)$ directly to the level $(4\tau,2h)$. Therefore it leads to a standard two-level method.

In what follows, we will derive the expressions of the multigrid iteration operators associated with these different strategies, and we will estimate their spectral radii in the case where the damping parameters of the smoothers are chosen according to Theorem~\ref{thm:optimal-w}. Then, we will be interested in the action of these multigrid operators on low frequency modes. Finally, we will try to find damping parameters that optimize the whole multi-level method instead of optimizing only the smoother.

\subsection{Original strategy}
\label{sec:strategie-neumuller}

For the simplest three-level V-cycle illustrated in Figure~\ref{fig:coarsening-strategies} (left), we consider $\nu_1$ and $\nu_2$ pre/post-smoothing steps at the finest level and $\eta_1$ and $\eta_2$ pre/post-smoothing steps at the level $(2\tau,2h)$.

$\:$\\
\textbf{Notation} The operator associated with the space-time prolongation from a given coarse grid $(m\tau',nh')$ to the finer grid $(\tau',h')$ will be denoted by $P_{m,n}$, for $m,n\in\{1,2\}$. This notation does not take into account the grid $(\tau',h')$ considered therefore the definition of $P_{m,n}$ depends on the context, e.g. for $m=n=2$:
\begin{equation*}
    \begin{aligned}
        &P_{2,2}L_{2\tau,2h} = (P_{2\tau\to\tau}\otimes P_{2h\to h})L_{2\tau,2h}\;, \\
        &P_{2,2}L_{4\tau,2h} = (P_{4\tau\to 2\tau}\otimes P_{2h\to h})L_{4\tau,2h}\;.
    \end{aligned}  
\end{equation*}
Similarly, we denote the corresponding restriction operator from the grid $(\tau',h')$ to the coarser grid $(m\tau',nh')$ by $R_{m,n}=\frac{1}{2}P_{m,n}^T$. \\

Using the notations introduced above, we can write the expression of the multigrid iteration operator as
\begin{equation}
    M^o = S_{\tau,h}^{\nu_{2}}\big[I_{\tau,h} - P_{2,2}(\widetilde{L_{2\tau,2h}^{-1}})R_{2,2}L_{\tau,h}\big]S_{\tau,h}^{\nu_{1}} \;,
\end{equation}
where $\widetilde{L_{2\tau,2h}^{-1}}$ is an approximation of the inverse of $L_{2\tau,2h}$ given by
\begin{equation}
\label{eq:def-app-Lcoarseinv}
    \begin{aligned}
        \widetilde{L_{2\tau,2h}^{-1}} := &\left( I_{2\tau,2h} - S_{2\tau,2h}^{\eta_{2}}\big[I_{2\tau,2h} \right. \\
        & \left. - P_{2,1}(L_{4\tau,2h}^{-1})R_{2,1}L_{2\tau,2h}\big]S_{2\tau,2h}^{\eta_{1}} \right) L^{-1}_{2\tau,2h} \;.
    \end{aligned}
\end{equation}

\begin{remark}
    The definition of $\widetilde{L_{2\tau,2h}^{-1}}$ reflects the fact that, instead of solving exactly on the grid $(2\tau,2h)$ with $L_{2\tau,2h}^{-1}$, we solve approximately using one V-cycle on a coarser grid starting with zero initial approximation.
\end{remark}

In order to study the properties of the operator $M^o$, we compute its action on the Fourier modes $\boldsymbol{\Phi}(\theta_t,\theta_x)$, for all $(\theta_t,\theta_x)\in\Theta$.

\paragraph{Preliminary notions.}
Unlike the smoothing operator, the multigrid iteration operator cannot be diagonalized by the Fourier modes. 
Indeed, for a given fine grid low frequency mode, when working on the coarse grid, it is not possible to distinguish this mode from a corresponding fine grid high frequency mode (often referred to as \textit{companion} mode). This phenomenon is called \textit{aliasing}. In terms of operator, when studying the action of $M^o$ on the fine grid modes, the aliasing phenomenon results in a coupling between low frequency modes and their high frequency companion modes. However, as we will show next, it is still possible to block diagonalize $M^o$ by gathering the fine grid modes in groups of companion modes, called \textit{spaces of harmonics}.
In order to explain this in detail, let us begin with introducing two mapping operators 
\begin{definition}
\label{def:mapping-freq-op}
    Let $\gamma_2:(-\frac{\pi}{2},\frac{\pi}{2}]\to(-\pi,-\frac{\pi}{2}]\cup(\frac{\pi}{2},\pi]$ and $\gamma_4:(-\frac{\pi}{4},\frac{\pi}{4}]\to(-\frac{\pi}{2},-\frac{\pi}{4}]\cup(\frac{\pi}{4},\frac{\pi}{2}]$ be defined as
    \begin{equation*}
        \begin{aligned}
            \gamma_2(\theta) &:= \theta-\sign(\theta)\pi\;, & \forall \theta \in (-\frac{\pi}{2},\frac{\pi}{2}]\;, \\
            \gamma_4(\theta) &:= \theta-\sign(\theta)\frac{\pi}{2}\;, & \forall \theta \in (-\frac{\pi}{4},\frac{\pi}{4}]\;,
        \end{aligned}
    \end{equation*}
    where we have chosen the convention $\sign(0)=-1$.
\end{definition}

For the reader's convenience, each low time frequency in $(-\frac{\pi}{4},\frac{\pi}{4}]$ will be denoted by $\check{\check{\theta}}_t$, and each low space frequency in $(-\frac{\pi}{2},\frac{\pi}{2}]$ will be denoted by $\check{\theta}_x$. Then, for each low time frequency, we will denote by $\hat{\check{\theta}}_t:=\gamma_2(\check{\check{\theta}}_t)$, $\check{\hat{\theta}}_t:=\gamma_4(\check{\check{\theta}}_t)$ and $\hat{\hat{\theta}}_t:=\gamma_2\circ\gamma_4(\check{\check{\theta}}_t)$. Using the same notation for space frequencies, we get $\hat{\theta}_x:=\gamma_2(\check{\theta}_x)$.

In the present section, the coarsest grid in the cycles studied is $(4\tau,2h)$, which means that the low frequency domain, corresponding to the frequencies that can be represented on this coarse grid, is given by $\Theta^{low}=(-\pi/4,\pi/4]\times(-\pi/2,\pi/2]$. Using the frequency mapping operators defined above, it is possible to rewrite the Fourier decomposition \eqref{eq:Fourier-decomp} as a sum on the low frequencies only.

\begin{lemma}
    Let $\boldsymbol{u}=(\boldsymbol{u}_1, \boldsymbol{u}_2,\dots, \boldsymbol{u}_{N_t})\in\mathbb{R}^{N_t N_x}$. Then the vector $\boldsymbol{u}$ can be written as
    \begin{equation}
        \label{eq:Fourier-decomp-bis}
        \begin{aligned}
            \boldsymbol{u} = \hspace{-1em} & \sum_{(\check{\check{\theta}}_t,\check{\theta}_x)\in\Theta^{low}} \hspace{-1em} \left[\boldsymbol{\Psi}(\check{\check{\theta}}_t,\check{\theta}_x) + \boldsymbol{\Psi}(\check{\hat{\theta}}_t,\check{\theta}_x) + \boldsymbol{\Psi}(\hat{\check{\theta}}_t,\check{\theta}_x) + \boldsymbol{\Psi}(\hat{\hat{\theta}}_t,\check{\theta}_x) \right. \\
            & \quad + \boldsymbol{\Psi}(\check{\check{\theta}}_t,\hat{\theta}_x) + \boldsymbol{\Psi}(\check{\hat{\theta}}_t,\hat{\theta}_x) + \boldsymbol{\Psi}(\hat{\check{\theta}}_t,\hat{\theta}_x) + \boldsymbol{\Psi}(\hat{\hat{\theta}}_t,\hat{\theta}_x) \left. \right],
        \end{aligned}
    \end{equation} 
    where we have used the notation
    \begin{equation*}
        \boldsymbol{\Psi}(\theta_t,\theta_x) := \hat{u}(\theta_t,\theta_x)\boldsymbol{\Phi}(\theta_t,\theta_x).
    \end{equation*}
\end{lemma}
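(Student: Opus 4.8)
The plan is to recognize \eqref{eq:Fourier-decomp-bis} as a mere regrouping of the original Fourier decomposition \eqref{eq:Fourier-decomp}. Since the summand $\boldsymbol{\Psi}(\theta_t,\theta_x)=\hat{u}(\theta_t,\theta_x)\boldsymbol{\Phi}(\theta_t,\theta_x)$ is exactly the term appearing in \eqref{eq:Fourier-decomp}, the only thing to prove is that, as the low frequency $(\check{\check{\theta}}_t,\check{\theta}_x)$ ranges over $\Theta^{low}=(-\pi/4,\pi/4]\times(-\pi/2,\pi/2]$, the eight argument pairs on the right-hand side of \eqref{eq:Fourier-decomp-bis} run through each element of the full domain $\Theta=\Theta_t\times\Theta_x$ exactly once. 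Thanks to the product structure of $\Theta$, I would treat the time and space directions separately and then combine them.

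In the space direction, $\gamma_2$ shifts a frequency by $\pm\pi$, which is a multiple of the grid spacing $2\pi/N_x$ whenever $N_x$ is even; hence $\gamma_2$ maps the discrete low-frequency set $\Theta_x\cap(-\pi/2,\pi/2]$ bijectively onto $\Theta_x\cap\left((-\pi,-\pi/2]\cup(\pi/2,\pi]\right)$. Consequently the two arguments $\check{\theta}_x$ and $\hat{\theta}_x=\gamma_2(\check{\theta}_x)$ enumerate all of $\Theta_x$ exactly once.

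The time direction is analogous but finer. Starting from $\check{\check{\theta}}_t\in(-\pi/4,\pi/4]$ and using the convention $\sign(0)=-1$, a direct check of the half-open intervals shows that the four images land in disjoint pieces: $\check{\check{\theta}}_t$ stays in $(-\pi/4,\pi/4]$; $\check{\hat{\theta}}_t=\gamma_4(\check{\check{\theta}}_t)$ lands in $(-\pi/2,-\pi/4]\cup(\pi/4,\pi/2]$; $\hat{\hat{\theta}}_t=\gamma_2\circ\gamma_4(\check{\check{\theta}}_t)$ in $(-3\pi/4,-\pi/2]\cup(\pi/2,3\pi/4]$; and $\hat{\check{\theta}}_t=\gamma_2(\check{\check{\theta}}_t)$ in $(-\pi,-3\pi/4]\cup(3\pi/4,\pi]$. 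These seven subintervals are pairwise disjoint and their union is exactly $(-\pi,\pi]$. Since $\gamma_4$ shifts by $\pm\pi/2$ and $\gamma_2$ by $\pm\pi$, both multiples of the grid spacing $2\pi/N_t$ as soon as $4\mid N_t$, each map sends discrete frequencies to discrete frequencies, so the four time arguments enumerate $\Theta_t$ exactly once.

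Finally I would take the Cartesian product of the two-fold space partition with the four-fold time partition; this produces precisely the eight argument pairs listed in \eqref{eq:Fourier-decomp-bis} and exhausts $\Theta$ without repetition. Substituting into \eqref{eq:Fourier-decomp} and collecting terms by their common low-frequency index $(\check{\check{\theta}}_t,\check{\theta}_x)$ yields the claimed identity. I expect the only delicate point to be the bookkeeping of the half-open endpoints under $\gamma_2$ and $\gamma_4$ (which is exactly why the convention $\sign(0)=-1$ is fixed in Definition~\ref{def:mapping-freq-op}), together with the implicit divisibility assumptions $2\mid N_x$ and $4\mid N_t$ that guarantee the frequency maps preserve the discrete grids.
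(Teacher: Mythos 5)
Your proposal is correct and takes essentially the same route as the paper: the paper's proof consists of the single observation that $\gamma_2$ and $\gamma_4$ are one-to-one, which is exactly the partition-and-regroup argument you carry out. Your explicit interval bookkeeping and the divisibility conditions $2\mid N_x$, $4\mid N_t$ simply spell out details that the paper's one-line proof leaves implicit.
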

\begin{proof}
    The proof simply relies on the fact that the operators $\gamma_2:(-\frac{\pi}{2},\frac{\pi}{2}]\to(-\pi,-\frac{\pi}{2}]\cup(\frac{\pi}{2},\pi]$ and $\gamma_4:(-\frac{\pi}{4},\frac{\pi}{4}]\to(-\frac{\pi}{2},-\frac{\pi}{4}]\cup(\frac{\pi}{4},\frac{\pi}{2}]$ are one-to-one.
\end{proof}
This result means that each vector $u$ can be expressed as a linear combination, over all $(\check{\check{\theta}}_t,\check{\theta}_x)\in\Theta^{low}$, of elements in the \textit{space of harmonics} associated to the fine grid defined by
\begin{equation}
\label{eq:companion-modes}
    \begin{aligned}
        \mathcal{E}_{\tau,h}(\theta_t,\theta_x) := \text{span} \left\{ \right. & \left. \boldsymbol{\Phi}(\check{\check{\theta}}_t,\check{\theta}_x), 
        \boldsymbol{\Phi}(\check{\hat{\theta}}_t,\check{\theta}_x),
        \boldsymbol{\Phi}(\hat{\check{\theta}}_t,\check{\theta}_x),  \right. \\
        & \left. \boldsymbol{\Phi}(\hat{\hat{\theta}}_t,\check{\theta}_x),
        \boldsymbol{\Phi}(\check{\check{\theta}}_t,\hat{\theta}_x),
        \boldsymbol{\Phi}(\check{\hat{\theta}}_t,\hat{\theta}_x), \right. \\
        &\left. \boldsymbol{\Phi}(\hat{\check{\theta}}_t,\hat{\theta}_x),  
         \boldsymbol{\Phi}(\hat{\hat{\theta}}_t,\hat{\theta}_x)\: \right\} .
    \end{aligned}
\end{equation}
For simplicity, we have chosen the notation $(\theta_t,\theta_x)$ to refer to the low frequency $(\check{\check{\theta}}_t,\check{\theta}_x)$.
In the same spirit, we define the Fourier spaces associated with the coarser grids $(2\tau,2h)$ and $(4\tau,2h)$ as
\begin{equation*}
    \begin{aligned}
        &\mathcal{F}_{2\tau,2h}(2\theta_t,2\theta_x):= \text{span} \left\{ \boldsymbol{\Phi}(2\check{\theta}_t,2\theta_x), \boldsymbol{\Phi}(2\hat{\theta}_{t},2\theta_{x}) \right\}, \\
        &\mathcal{F}_{4\tau,2h}(4\theta_t,2\theta_x):= \text{span} \left\{ \boldsymbol{\Phi}(4\theta_t,2\theta_x)\right\}.
    \end{aligned}
\end{equation*}

Especially, for each low frequency $(\theta_t,\theta_x)$, the eight fine grid modes spanning the space of harmonics get aliased to the same coarse grid mode $\boldsymbol{\Phi}(4\theta_t,2\theta_x)$ spanning the Fourier space associated to the coarsest grid. Therefore, in order to study the convergence properties of the operator $M^o$, we will focus on its action on these eight fine grid modes for each low frequency.

\paragraph{Action of $S_{\tau,h}$.}
For each mode $\boldsymbol{\Phi}(\theta_t,\theta_x)$, we have already seen in \eqref{eq:FT-smoother} that
\begin{equation}
\label{eq:smoother-symbol}
    \begin{aligned}
        S_{\tau,h}\boldsymbol{\Phi}(\theta_t,\theta_x) &= \left(1-\omega +\frac{\omega}{c_x}e^{-i\theta_t} \right)\boldsymbol{\Phi}(\theta_t,\theta_x) \\
        &=: \hat{S}_{\tau,h}(\theta_t,\theta_x)\boldsymbol{\Phi}(\theta_t,\theta_x)\;.
    \end{aligned}
\end{equation}
For the reader's convenience, since $\omega$ is fixed here, we have dropped the dependence on $\omega$ in $\hat{S}_{\tau,h}$.
Now, we can deduce from~\eqref{eq:smoother-symbol} that the smoother maps the space of harmonics to itself. Furthermore, the action of the smoother on this space of harmonics takes the form of a diagonal matrix in $\mathbb{C}^{8\times 8}$:
\begin{equation*}
    \widehat{S_{\tau,h}} : \mathcal{E}_{\tau,h}(\theta_t,\theta_x) \to \mathcal{E}_{\tau,h}(\theta_t,\theta_x)\;,
\end{equation*}
\begin{equation*}
    \widehat{S_{\tau,h}}:=
    \begin{pmatrix}
            \hat{S}_{\tau,h}(\check{\check{\theta}}_t,\check{\theta}_x) \\ 
            & \hat{S}_{\tau,h}(\check{\check{\theta}}_t,\hat{\theta}_x)\\ 
            && \ddots \\ 
            &&& \hat{S}_{\tau,h}(\hat{\hat{\theta}}_t,\check{\theta}_x)
    \end{pmatrix}
    .
\end{equation*}

\paragraph{Action of $L_{\tau,h}$.}
For each mode $\boldsymbol{\Phi}(\theta_t,\theta_x)$, we have
\begin{equation}
    \label{eq:action-L-fine}
    \begin{aligned}
        L_{\tau,h}\boldsymbol{\Phi}(\theta_t,\theta_x) &= \left(1-e^{-i\theta_t} +2\sigma(1-\cos\theta_x) \right)\boldsymbol{\Phi}(\theta_t,\theta_x) \\
        &=: \hat{L}_{\tau,h}(\theta_t,\theta_x)\boldsymbol{\Phi}(\theta_t,\theta_x)\;.
    \end{aligned}
\end{equation}
This yields the following operator acting on the space of harmonics:
\begin{equation*}
    \begin{aligned}
        \widehat{L_{\tau,h}} &: \mathcal{E}_{\tau,h}(\theta_t,\theta_x) \to \mathcal{E}_{\tau,h}(\theta_t,\theta_x)\;, \\
        \widehat{L_{\tau,h}} &:=
        \begin{pmatrix}
            \hat{L}_{\tau,h}(\check{\check{\theta}}_t,\check{\theta}_x) \\ 
            & \hat{L}_{\tau,h}(\check{\check{\theta}}_t,\hat{\theta}_x)\\ 
            && \ddots \\ 
            &&& \hat{L}_{\tau,h}(\hat{\hat{\theta}}_t,\check{\theta}_x)
        \end{pmatrix}
        .
    \end{aligned}
\end{equation*}

\paragraph{Action of $R_{2,2}$.}
In the context of the restriction from the fine grid $(\tau,h)$, for each mode $\boldsymbol{\Phi}(\theta_t,\theta_x)$, we have
\begin{equation}
    \begin{aligned}
        R_{2,2}\boldsymbol{\Phi}(\theta_t,\theta_x) &= \frac{(1+\cos\theta_x)}{2}\frac{(1+\cos\theta_t)}{2}\boldsymbol{\Phi}(\theta_t,\theta_x) \\
        &=: \hat{R}(\theta_x)\hat{R}(\theta_t)\boldsymbol{\Phi}(2\theta_t,2\theta_x)\;.
    \end{aligned}
\end{equation}
Since $2\gamma_{2}(\theta)\equiv 2\theta$ for all $\theta\in(-\pi/2,\pi/2]$, it is clear from the previous equation that the first four modes in \eqref{eq:companion-modes} will be turned into the same ``coarser'' mode $\boldsymbol{\Phi}(2\check{\theta}_t,2\theta_x)$. Similarly, the last four modes in \eqref{eq:companion-modes} will be turned into $\boldsymbol{\Phi}(2\hat{\theta}_{t},2\theta_x)$. In matrix form, this takes the form of the operator
\begin{equation*}
    \begin{aligned}
        \widehat{R_{2,2}} &: \mathcal{E}_{\tau,h}(\theta_t,\theta_x) \to \mathcal{F}_{2\tau,2h}(2\theta_t,2\theta_x)\;, \\
        \widehat{R_{2,2}} &:=
        \begin{pmatrix}
            \hat{R}(\check{\check{\theta}}_t)\hat{R}(\check{\theta}_x) & \hat{R}(\check{\check{\theta}}_t)\hat{R}(\hat{\theta}_{x}) & \: \cdots \:\:  & 0 \\ 
            0 & 0 & \: \cdots \:\: & \hat{R}(\hat{\hat{\theta}}_t)\hat{R}(\hat{\theta}_{x})
        \end{pmatrix}
        .
    \end{aligned}
\end{equation*}

\paragraph{Action of $P_{2,2}$.}
Since the prolongation operator can be written as $P_{2,2}= 2R_{2,2}^T$, we deduce that its action on the Fourier space associated with the grid $(2\tau,2h)$ can be modeled by the matrix operator
\begin{equation*}
    \begin{aligned}
        \widehat{P_{2,2}} &: \mathcal{F}_{2\tau,2h}(2\theta_t,2\theta_x) \to \mathcal{E}_{\tau,h}(\theta_t,\theta_x)\;, \\
        \widehat{P_{2,2}} &:= 2 (\widehat{R_{2,2}})^T\;.
    \end{aligned}
\end{equation*}

The next step is to express in a similar way the action of the approximate inverse $(\widetilde{L_{2\tau,2h}^{-1}})$. In order to do this, we first need to study the action of the operators involved in its expression.

\paragraph{Action of $L_{2\tau,2h}$.}
In the case where we consider the rediscretization approach rather than the Galerkin approach, we have $L_{2\tau, 2h} = I_{2\tau}\otimes Q_{2\tau,2h}+\Gamma_{-1,2\tau}\otimes B_{2\tau,2h}$. Therefore we obtain the following operator acting on the Fourier space\MG{:}
\begin{equation*}
    \begin{aligned}
        \widehat{L_{2\tau,2h}} &: \mathcal{F}_{2\tau,2h}(2\theta_t,2\theta_x) \to \mathcal{F}_{2\tau,2h}(2\theta_t,2\theta_x)\;, \\
        \widehat{L_{2\tau,2h}} &:=
        \begin{pmatrix}
            \hat{L}_{2\tau,2h}(2\check{\theta}_t,2\theta_x) & 0 \\ 
            0 & \hat{L}_{2\tau,2h}(2\hat{\theta}_{t},2\theta_{x})
        \end{pmatrix}
        ,
    \end{aligned}    
\end{equation*}
where we have used the notation
\begin{equation*}
    \hat{L}_{2\tau,2h}(2\theta_t,2\theta_x) := 1-e^{-i2\theta_t} +\sigma(1-\cos2\theta_x)\;.
\end{equation*}

\paragraph{Action of $S_{2\tau,2h}$.}
Adapting the definition of the damped block Jacobi smoother~\eqref{eq:def-smoother} to the grid $(2\tau,2h)$, we get
\begin{equation*}
    S_{2\tau,2h} = I_{2\tau,2h} - \omega D_{2\tau,2h}^{-1}L_{2\tau,2h}\;,
\end{equation*}
where $D_{2\tau,2h}:=\text{diag}\{Q_{2\tau,2h}\}$. We may now use this expression to study the action of this smoothing operator on the Fourier space, which leads to
\begin{equation*}
    \begin{aligned}
        \widehat{S_{2\tau,2h}} &: \mathcal{F}_{2\tau,2h}(2\theta_t,2\theta_x) \to \mathcal{F}_{2\tau,2h}(2\theta_t,2\theta_x)\;, \\
        \widehat{S_{2\tau,2h}} &:=
        \begin{pmatrix}
            \hat{S}_{2\tau,2h}(2\check{\theta}_t,2\theta_x) & 0 \\ 
            0 & \hat{S}_{2\tau,2h}(2\hat{\theta}_{t},2\theta_{x})
        \end{pmatrix}
        ,
    \end{aligned}    
\end{equation*}
where we have used the notation
\begin{equation*}
    \hat{S}_{2\tau,2h}(2\theta_t,2\theta_x) := 1-\omega +\frac{\omega}{1+\sigma(1-\cos2\theta_x)}e^{-i2\theta_t}\;.
\end{equation*}

\paragraph{Action of $R_{2,1}$.}
In the context of the grid $(2\tau,2h)$, we have $R_{2,1}=R_{2\tau\to 4\tau}\otimes I_{2h}$. Therefore we obtain the following matrix operator acting on the Fourier space\MG{:}
\begin{equation*}
    \begin{aligned}
        \widehat{R_{2,1}} &: \mathcal{F}_{2\tau,2h}(2\theta_t,2\theta_x) \to \mathcal{F}_{4\tau,2h}(4\theta_t,2\theta_x)\;, \\
        \widehat{R_{2,1}} &:=
        \begin{pmatrix}
            \hat{R}(2\check{\theta}_t) & \hat{R}(2\hat{\theta}_{t})
        \end{pmatrix}
        .
    \end{aligned}
\end{equation*}

\paragraph{Action of $P_{2,1}$.}
As in the case of $P_{2,2}$, we can deduce the representation of $P_{2,1}$ in Fourier space from the one of $R_{2,1}$
\begin{equation*}
    \begin{aligned}
        \widehat{P_{2,1}} &: \mathcal{F}_{4\tau,2h}(4\theta_t,2\theta_x) \to \mathcal{F}_{2\tau,2h}(2\theta_t,2\theta_x)\;, \\
        \widehat{P_{2,1}} &:= 2 (\widehat{R_{2,1}})^T\;.
    \end{aligned}
\end{equation*}

\paragraph{Action of $L_{4\tau,2h}$.}
Considering again the rediscretization approach, we proceed as in the case of $L_{2\tau, 2h}$, which yields here the operator
\begin{equation*}
    \begin{aligned}
        \widehat{L_{4\tau,2h}} &: \mathcal{F}_{4\tau,2h}(4\theta_t,2\theta_x) \to \mathcal{F}_{4\tau,2h}(4\theta_t,2\theta_x)\;, \\
        \widehat{L_{4\tau,2h}} &:= \left( \hat{L}_{4\tau,2h}(4\theta_t,2\theta_x) \right) ,
    \end{aligned}    
\end{equation*}
where we have used the notation
\begin{equation*}
    \hat{L}_{4\tau,2h}(4\theta_t,2\theta_x) := 1-e^{-i4\theta_t} +2\sigma(1-\cos2\theta_x)\;.
\end{equation*}

Now we have studied the action of each operator involved in the expression of $M^o$, it is clear that the operator $\widehat{M}^o$ representing the action of $M^o$ on the space of harmonics verifies the following mapping property
\begin{equation*}
    \widehat{M}^o : \mathcal{E}_{\tau,h}(\theta_t,\theta_x) \to \mathcal{E}_{\tau,h}(\theta_t,\theta_x)\;.
\end{equation*}
Moreover, for each low frequency $(\theta_t,\theta_x)$, we are now in a position to compute the spectral radius $\rho^o(\theta_t,\theta_x)$ of this $8\times 8$ complex-valued matrix. Especially, we are interested in the convergence factor of the multigrid method obtained with this coarsening strategy, i.e. we would like to compute
\begin{equation}
\label{eq:sp-rad-Neumuller}
    \bar{\rho}^o := \max \left\{ \rho^o(\theta_t,\theta_x) \: | \: (\theta_t,\theta_x)\in\Theta^{low} \right\}\;.
\end{equation}

\subsection{New strategy}

For the simplest two-level V-cycle illustrated in Figure~\ref{fig:coarsening-strategies} (right), with $\nu_1$ and $\nu_2$ pre/post-smoothing steps, we get the multigrid iteration operator
\begin{equation}
    \label{eq:Mn}
    M^n = S_{\tau,h}^{\nu_{2}}\big[I_{\tau,h} - P_{4,2}(L_{4\tau,2h}^{-1})R_{4,2}L_{\tau,h}\big]S_{\tau,h}^{\nu_{1}} \;,
\end{equation}
where $P_{4,2}:=P_{2,2}P_{2,1}$ and $R_{4,2}:=R_{2,1}R_{2,2}=\frac{1}{4}P_{4,2}^T$.

Similarly to the analysis in Subsection~\ref{sec:strategie-neumuller}, we need to compute the Fourier symbols for each of the operators in~\eqref{eq:Mn}. For the operators $S_{\tau, h}$, $L_{\tau, h}$ and $L_{2\tau, 2h}$ please refer to the discussion in Subsection~\ref{sec:strategie-neumuller}. 

\paragraph{Action of $R_{4, 2}$.}
Given the definition of $R_{4,2}$ above, we have in the context of a fine grid $(\tau,h)$ that $R_{4,2} = (R_{2\tau\to 4\tau}R_{\tau\to 2\tau})\otimes R_{\tau\to 2\tau}$. By writing the explicit form of the operator $R_{2\tau\to 4\tau}R_{\tau\to 2\tau}$, we obtain 
\begin{equation*}
    \begin{aligned}
    R_{4, 2}\boldsymbol{\Phi}(\theta_t, \theta_x) = \hat{R}(\theta_t)\hat{R}(2\theta_t)\hat{R}(\theta_x)\boldsymbol{\Phi}(4\theta_t, 2\theta_x)\;,
    \end{aligned}
\end{equation*}
where we have used the trigonometric relation (obtained through Werner's formula)
\begin{equation*}
    \hat{R}(\theta_t)\hat{R}(2\theta_t)=\frac{1}{8}[\cos3\theta_t + 2\cos2\theta_t + 3\cos\theta_t + 2]\;.
\end{equation*}
We can therefore represent the action of this restriction operator on the space of harmonics by
\begin{equation*}
    \begin{aligned}
        \widehat{R_{4,2}} &: \mathcal{E}_{\tau,h}(\theta_t,\theta_x) \to \mathcal{F}_{4\tau,2h}(4\theta_t,2\theta_x)\;, \\
        \widehat{R_{4,2}} &:=
        \begin{pmatrix}
           \hat{R}(\check{\check{\theta}}_t)\hat{R}(2\check{\theta}_t)\hat{R}(\check{\theta}_x) & \: \cdots \: &  \hat{R}(\hat{\hat{\theta}}_{t})\hat{R}(2\hat{\theta}_{t})\hat{R}(\hat{\theta}_{x})
        \end{pmatrix}
        .
    \end{aligned}
\end{equation*}

\paragraph{Action of $P_{4, 2}$.}
By the definition of $P_{4, 2}$, we may represent it by
\begin{equation*}
    \begin{aligned}
        \widehat{P_{4, 2}}&:  \mathcal{F}_{4\tau, 2h}(4\theta_t, 2\theta_x) \to \mathcal{E}_{\tau, h}(\theta_t, \theta_x)\;, \\ 
        \widehat{P_{4, 2}} &:= 4\big(\widehat{R_{4, 2}}\big)^{\top}\;.
    \end{aligned}
\end{equation*}

\begin{remark}
    If $\eta_1 = \eta_2 = 0$ in the original coarsening strategy, then both coarsening strategies are equivalent. Indeed, in this case the approximation \eqref{eq:def-app-Lcoarseinv} of the inverse of $L_{2\tau,2h}$ becomes
    \begin{equation*}
        \widetilde{L_{2\tau,2h}^{-1}} = P_{2,1}(L_{4\tau,2h}^{-1})R_{2,1}\;.  
    \end{equation*}
    Then, since $P_{4,2}=P_{2,2}P_{2,1}$ and $R_{4,2}=R_{2,1}R_{2,2}$, it follows that $M^o=M^n$.
\end{remark}

As in the case of the previous strategy, it follows from these results that the operator $\widehat{M}^n$ representing the action of $M^n$ on the space of harmonics verifies the same mapping property
\begin{equation*}
    \widehat{M}^n : \mathcal{E}_{\tau,h}(\theta_t,\theta_x) \to \mathcal{E}_{\tau,h}(\theta_t,\theta_x)\;.
\end{equation*}
We may now study the following quantity, which depends on $\sigma$,
\begin{equation}
\label{eq:sp-rad-Neumuller}
    \bar{\rho}^n := \max \left\{ \rho^n(\theta_t,\theta_x) \: | \: (\theta_t,\theta_x)\in\Theta^{low} \right\}\;,
\end{equation}
where for each frequency $(\theta_t,\theta_x)$, $\rho^n(\theta_t,\theta_x)$ denotes the spectral radius of $\widehat{M}^n$.

For $\nu_1=\eta_1=3$ pre-smoothing and $\nu_2=\eta_2=3$ post-smoothing steps, we plotted in Figures~\ref{fig:rho(sigma)-orig-vs-new(default)} and \ref{fig:rho(sigma)-orig-vs-new} the dependence of $\bar{\rho}^o$ and $\bar{\rho}^n$ on $\sigma$. 
\begin{figure}
    \centering
    \includegraphics[width=\columnwidth]{./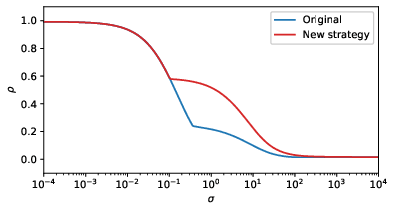}
    \caption{$\bar{\rho}^o$ and $\bar{\rho}^n$ as functions of $\sigma$, taking $\omega=\frac12$.}
    \label{fig:rho(sigma)-orig-vs-new(default)}
\end{figure}
\begin{figure}
    \centering
    \includegraphics[width=\columnwidth]{./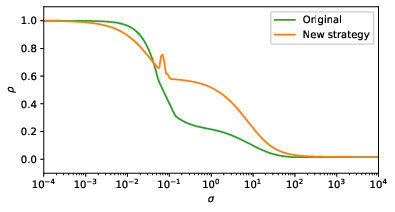}
    \caption{$\bar{\rho}^o$ and $\bar{\rho}^n$ as functions of $\sigma$, taking $\omega=\omega^*$ as in Theorem \ref{thm:optimal-w}.}
    \label{fig:rho(sigma)-orig-vs-new}
\end{figure}
In Figure~\ref{fig:rho(sigma)-orig-vs-new(default)}, we see that for $\omega=1/2$, $\bar{\rho}^o$ is always smaller than $\bar{\rho}^n$, which makes sense since there is an additional intermediate smoothing step in the original strategy (at the level $(2\tau,2h)$) compared to the new strategy.
Moreover, we also see that the convergence factors are very similar except for values of $\sigma$ approximately in $(0.02,400)$. This corresponds to the range of values of $\sigma$ for which the intermediate smoothing step has the highest impact.
Now in Figure~\ref{fig:rho(sigma)-orig-vs-new}, we plotted the two spectral radii again but this time choosing the damping parameter $\omega=\omega^*$ as prescribed by Theorem~\ref{thm:optimal-w}. For intermediate and high values of $\sigma$, the remarks made in the case $\omega=\frac12$ remain valid. But for low values of $\sigma$, more specifically for $\sigma\leq 0.04$, we see that $\bar{\rho}^n$ is significantly smaller than $\bar{\rho}^o$. This means that for such values of $\sigma$, the new strategy is not only cheaper than the original strategy but also faster. One possible explanation for this surprisingly nice property is that, when $\sigma$ is small, $\omega^*$ is close to the optimal damping parameter for the multi-level method. The reader is referred to Subsection \ref{subsec:opt-omega} for a more detailed discussion of this issue.

\begin{figure}
    \centering
    \includegraphics[width=0.8\columnwidth]{./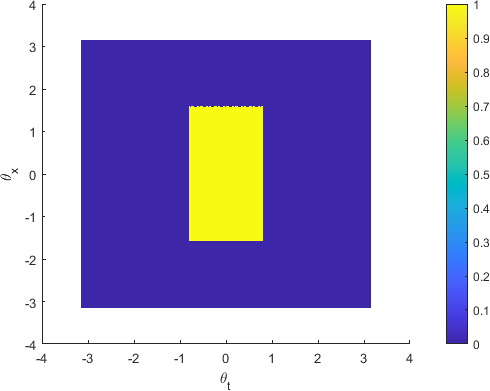}
    \caption{Representation of $\boldsymbol{V}_0$ as defined in~\eqref{eq:low-modes-input}}
    \label{fig:low-modes-input}
\end{figure}
\begin{figure}
    \centering
    \includegraphics[width=0.8\columnwidth]{./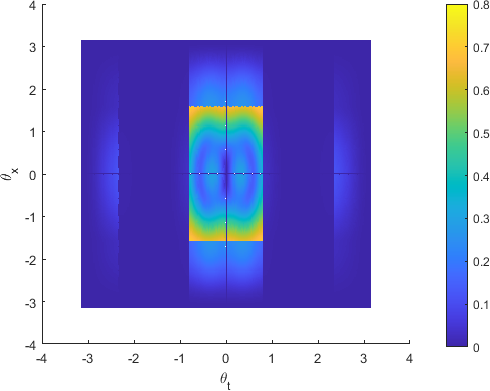}
    \includegraphics[width=0.8\columnwidth]{./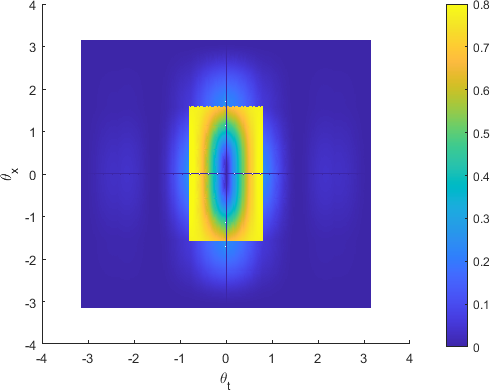}
    \caption{$\widehat{M}^o\boldsymbol{V}_0$ (top) and $\widehat{M}^n\boldsymbol{V}_0$ (bottom) for $\sigma=10^{-2}$}
    \label{fig:action-M-sigma-1e-2}
\end{figure}
\begin{figure}
    \centering
    \includegraphics[width=0.8\columnwidth]{./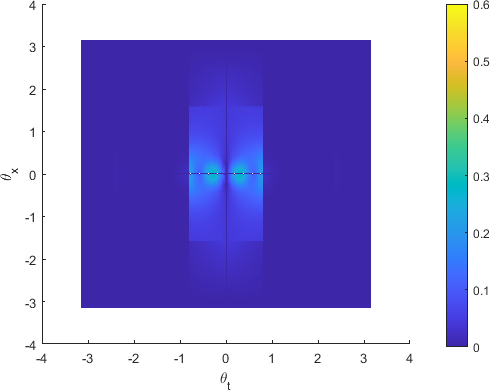}
    \includegraphics[width=0.8\columnwidth]{./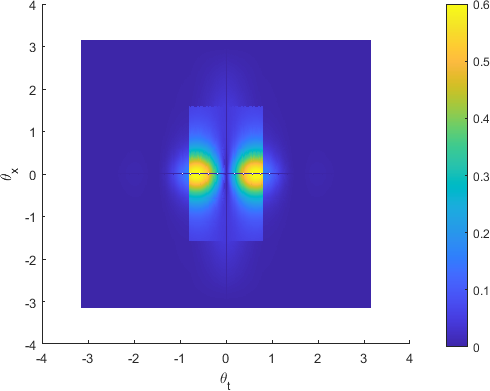}
    \caption{$\widehat{M}^o\boldsymbol{V}_0$ (top) and $\widehat{M}^n\boldsymbol{V}_0$ (bottom) for $\sigma=1$}
    \label{fig:action-M-sigma-1}
\end{figure}

\subsection{Action of the multigrid operators on $\Theta^{low}$}

In order to illustrate the action of the two-grid operators $M^o$ and $M^n$ on low frequency modes, we begin with choosing a simple linear combination of all low frequency modes as input
\begin{equation}
    \label{eq:low-modes-input}
    \boldsymbol{V}_0 := \sum_{(\theta_t,\theta_x)\in\Theta^{low}} \boldsymbol{\Phi}(\theta_t,\theta_x)\;.
\end{equation}
For any combination of the space-time frequency modes
\begin{equation*}
    \boldsymbol{V} := \sum_{(\theta_t,\theta_x)\in\Theta} v(\theta_t,\theta_x)\boldsymbol{\Phi}(\theta_t,\theta_x)\;,
\end{equation*}
with coefficients $v(\theta_t,\theta_x)\in\mathbb{C}$, we choose to represent it graphically by the moduli of its coefficients $|v|$ defined on the space-time frequency space $\Theta$.
Following this choice, we plotted the graph of $\boldsymbol{V}_0$ in Figure~\ref{fig:low-modes-input}, illustrating that $|v_0|$ takes the value~1 if $(\theta_t,\theta_x)\in\Theta^{low}$ and 0 otherwise. For $\nu_1=\eta_1=3$ pre-smoothing and $\nu_2=\eta_2=3$ post-smoothing steps, we have computed $\widehat{M}^o\boldsymbol{V}_0$ and $\widehat{M}^n\boldsymbol{V}_0$ for $\omega=\omega^*$ and different values of $\sigma$, and plotted the results in Figures~\ref{fig:action-M-sigma-1e-2} and \ref{fig:action-M-sigma-1}. According to these results, it appears that the intermediate smoothing step makes the original strategy more efficient than the new strategy when applied to the low frequency input $\boldsymbol{V}_0$ for both values $\sigma=1$ and $\sigma=10^{-2}$. Especially, the original strategy is much more efficient to damp space-time frequency modes $\boldsymbol{\Phi}(\theta_t,\theta_x)$ for which $\theta_t\in [-\frac{3\pi}{4},-\frac{\pi}{4}]\cup[\frac{\pi}{4},\frac{3\pi}{4}]$. An explanation for this is that the intermediate smoothing step is related to the coarsening from the finer grid $(2\tau,2h)$ to the coarser grid $(4\tau,2h)$. Therefore it is optimized to damp efficiently associated high frequency modes, i.e. modes for which $\theta_t\in[-\frac{\pi}{2},-\frac{\pi}{4}]\cup[\frac{\pi}{4},\frac{\pi}{2}]$. This damping effect also impacts the corresponding companion modes with respect to the coarsening $(\tau,h)\to(2\tau,2h)$, i.e. modes for which $\theta_t\in[-\frac{3\pi}{4},-\frac{\pi}{2}]\cup[\frac{\pi}{2},\frac{3\pi}{4}]$.

For $\sigma=10^{-2}$, we see in Figure~\ref{fig:action-M-sigma-1e-2} that the coefficients of $\widehat{M}^n\boldsymbol{V}_0$ are maximal near the boundaries of $\Theta^{low}$. Thus for the original strategy, the coefficients of $\widehat{M}^o\boldsymbol{V}_0$ near the boundaries $\left\{\theta_t=\pm \frac{\pi}{4}\right\}$ are well damped thanks to the effect of the intermediate smoothing step but the ones near the boundaries $\left\{\theta_x=\pm \frac{\pi}{2}\right\}$ are similar to the ones for the new strategy. Hence the global performances of both strategies are rather similar when applied to the input $\phi_0$.

Besides, in the case where $\sigma=1$, Figure~\ref{fig:action-M-sigma-1} shows that the coefficients of $\widehat{M}^n\boldsymbol{V}_0$ are maximal for the modes ${(\theta_t,\theta_x)=\left(\pm\frac{\pi}{4},0 \right)}$. So the damping effect of the intermediate smoothing step is more important than in the case $\sigma=10^{-2}$, and we end up with maximal coefficients of $\widehat{M}^o\boldsymbol{V}_0$ about half of the ones of $\widehat{M}^n\boldsymbol{V}_0$. Thus the original strategy is much more efficient than the new one in this case.

\subsection{Optimizing the damping parameter for the multi-level method}
\label{subsec:opt-omega}

In the previous subsections (except from Figure~\ref{fig:rho(sigma)-orig-vs-new(default)}), we had chosen for each strategy a damping parameter equal to $\omega^*$ according to Theorem~\ref{thm:optimal-w} in order to maximize the efficiency of the smoothing step. However, this choice might not be optimal when considering the multi-level strategies presented above. Indeed, there may exist values of $\omega$ such that the spectral radii $\bar{\rho}^o$ and $\bar{\rho}^n$ are minimal, for given values $\nu_1$, $\nu_2$, $\eta_1$ and $\eta_2$. Let us denote these values of damping parameter by $\omega_{opt}^o$ and $\omega_{opt}^n$. By definition, we have 
\begin{equation*}
        \omega_{opt}^o := \argmin_{\omega \in [0,1]} \bar{\rho}^o\;, \quad
        \omega_{opt}^n := \argmin_{\omega \in [0,1]} \bar{\rho}^n\;.
\end{equation*}
Since the expressions of $\bar{\rho}^o$ and $\bar{\rho}^o$ are already quite intricate, we do not seek to find the analytical expressions of $\omega_{opt}^o$ and $\omega_{opt}^n$. Instead, we compute them numerically as a function of $\sigma$, for fixed numbers of smoothing steps $\nu_1$, $\nu_2$, $\eta_1$ and $\eta_2$.

\begin{figure}
    \centering
    \includegraphics[width=\columnwidth]{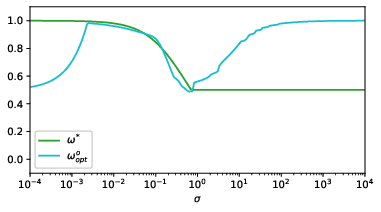}
    \caption{$\omega_{opt}^o$ and $\omega^*$ as functions of $\sigma$}
    \label{fig:omega-opt-vs-star(orig)}
\end{figure}

\begin{figure}
    \centering
    \includegraphics[width=\columnwidth]{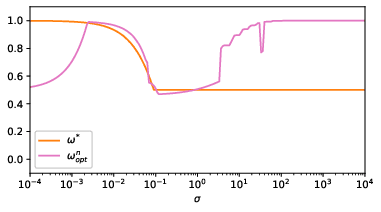}
    \caption{$\omega_{opt}^n$ and $\omega^*$ as functions of $\sigma$}
    \label{fig:omega-opt-vs-star(new)}
\end{figure}

\begin{figure}
    \centering
    \includegraphics[width=\columnwidth]{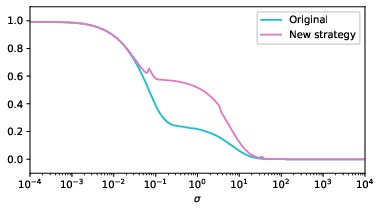}
    \caption{$\bar{\rho}^o$ and $\bar{\rho}^n$ as functions of $\sigma$, taking $\omega=\omega_{opt}$ as in Figures \ref{fig:omega-opt-vs-star(orig)} and \ref{fig:omega-opt-vs-star(new)}.}
    \label{fig:rho(sigma)-opt-orig-vs-new}
\end{figure}

In Figures \ref{fig:omega-opt-vs-star(orig)} and \ref{fig:omega-opt-vs-star(new)}, we plotted the values of $\omega_{opt}^o$ and $\omega_{opt}^n$ with respect to $\sigma$ for $\nu_1=\nu_2=\eta_1=\eta_2=3$, and we compared them to $\omega^*$ for each strategy. Then, in Figure \ref{fig:rho(sigma)-opt-orig-vs-new}, we have compared $\bar{\rho}^o$ and $\bar{\rho}^n$ when taking the optimal damping parameters $\omega_{opt}^o$ and $\omega_{opt}^n$ computed numerically. We see that for these optimal versions, the original strategy always performs better than the new one, which was expected since it contains additional intermediate pre/post-smoothing steps. Thus when taking the best out of each strategy, we obtain a more efficient (though more expensive) algorithm with the original strategy.

\section{Numerical experiments}
\label{sec:num-exp}

To illustrate our theory, we compute the solution to the heat equation~\eqref{eq:continuous-heat-equation} on the time interval $[0, 0.1]$ with the source term 
\begin{equation*}
    f(x, t) = x^4(1-x)^4 + 10\sin(8t)\;,
\end{equation*}
and initial value ${u_{0}(x) = 0}$. We start the iterative process with a random initial guess. 

We solve the problem in two different regimes: first for $\sigma$ small (Figure~\ref{fig:stmg-conv-small-sigma}, where both coarsening strategies with optimal parameters have an efficiency close to $2$), then for $\sigma$ large (Figure~\ref{fig:stmg-conv-big-sigma}, where both the default and optimal strategies share the same smoothing factor $\omega^* = 0.5$).
In all cases, $\nu_{1} = \eta_{1} = 3$ pre-smoothing and $\nu_{2} = \eta_{2} = 3$ post-smoothing steps were performed. 
The error is measured in the norm $L^\infty(0,T;L^2(\Omega))$.

For $\sigma\approx 1.56\times 10^{-1}$, we observe in Figure~\ref{fig:stmg-conv-small-sigma-unnormalized} that for the default parameter $\omega=0.5$, the convergence is quite slow for both strategies. We also see that the errors for the two strategies are very close (the red line is close to the blue line). 
Besides, for this value of $\sigma$ and the original coarsening strategy, choosing the damping parameter in an optimal fashion (see Theorem~\ref{thm:optimal-w}) yields some gains in terms of efficiency.
With this "optimal" version of the algorithm, we also observe that after 5 iterations of the new strategy, we get an error similar to the one when $\omega=0.5$ is used for 10 iterations. In terms of computing time, this is a gain of approximately $30\%$.
However, it is worth noting that even further gains can be achieved by computing an optimal two-level damping parameter. 
In the considered regime, the new coarsening strategy is supposed to be much worse than the original one (see Figures~\ref{fig:rho(sigma)-orig-vs-new(default)} and~\ref{fig:rho(sigma)-orig-vs-new}), which is what we observe for all damping parameter choices. So using the new coarsening strategy is more beneficial when the value of $\sigma$ is larger, which is what we encounter in practical situations.

\begin{figure}
    \centering
    \begin{subfigure}[t]{\columnwidth}
        \centering
        \includegraphics[width=\textwidth]{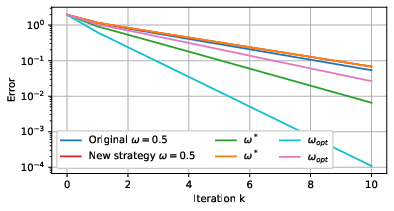}
        \caption{Error for ten iterations}
        \label{fig:stmg-conv-small-sigma-unnormalized}
    \end{subfigure}
    \begin{subfigure}[t]{\columnwidth}
        \centering
        \includegraphics[width=\textwidth]{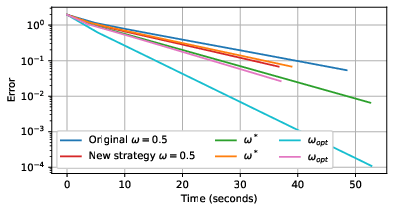}
        \caption{Error as a function of run-time (precomputations excluded)}
        \label{fig:stmg-conv-small-sigma-normalized}
    \end{subfigure}
    \caption{Convergence of the algorithm for a small ratio $\sigma \approx 1.56\times 10^{-1}$}
    \label{fig:stmg-conv-small-sigma}
\end{figure}

From Figure~\ref{fig:stmg-conv-big-sigma-unnormalized}, we notice that in the regime where $\sigma$ is large, the convergence rate of the new coarsening strategy is slightly lower than for the original strategy, which is to be expected given the values of the convergence factors in Figures~\ref{fig:rho(sigma)-orig-vs-new(default)} and~\ref{fig:rho(sigma)-orig-vs-new}. 
However, when plotting the error as a function of the computational time (see Figure~\ref{fig:stmg-conv-big-sigma-normalized}), we observe a quicker convergence for the new coarsening strategy. This is a very interesting optimization as the original STMG method presented in \cite{gander2016analysis} to which we compare our new strategy is considered to be extremely efficient in this regime. The results have been obtained using a sequential code and, in practice, the smoothing procedure would be computed in parallel, making it cheaper. Numerically, we observe a benefit to using the new strategy as soon as it is approximately $15\%$ cheaper than the original method in computational cost per iteration. 

\begin{figure}
    \centering
        \begin{subfigure}[t]{\columnwidth}
            \centering
            \includegraphics[width=\textwidth]{./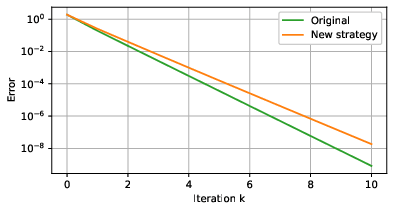}
            \caption{Error for ten iterations}
            \label{fig:stmg-conv-big-sigma-unnormalized}
        \end{subfigure}
        \begin{subfigure}[t]{\columnwidth}
            \centering
            \includegraphics[width=\textwidth]{./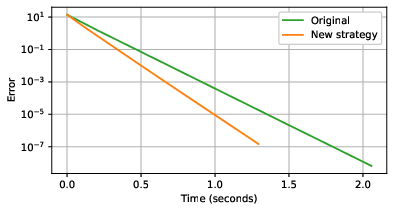}
            \caption{Error as a function of run-time (precomputations excluded)}
            \label{fig:stmg-conv-big-sigma-normalized}
        \end{subfigure}
    \caption{Convergence of the algorithm for a big ratio $\sigma \approx 645$}
    \label{fig:stmg-conv-big-sigma}
\end{figure}



\section{Conclusion}

We derived optimal smoothing factors for the Space-Time Multigrid
algorithm proposed in~\cite{gander2016analysis}, demonstrated
their efficiency compared to the original method and verified it
numerically.  Additionally, the original approach suggests to do a
coarsening by a factor $2$ in space and in time as long as
$\tau/h^2>1/\sqrt{2}$ holds, and then to proceed to do coarsening in
time only. In order to prevent this change of regimes, we suggested
to coarsen by a factor $2$ in space and $4$ in time. Our theoretical
analysis revealed that this new strategy leads to a multi-level
algorithm which has a comparable convergence factor to the original strategy, except for intermediate values of the ratio $\sigma$, where it is slightly bigger.
However, this new strategy is always competitive in practice as it is cheaper than the original one. Indeed, since it does not require any intermediate smoothing steps, it is slightly slower for intermediate values of $\sigma$, and it is faster for larger values of $\sigma$, where the original algorithm is known to perform extremely well. 
Finally, we also designed a numerical technique to optimize the multi-level method with respect to the damping parameter, therefore leading to an optimal version of the STMG algorithm in this context. 

As far as future work is concerned, it would be interesting to extend
the multigrid analysis to the case of higher order Runge Kutta methods
for the time discretization. Another natural extension would be to the heat equation in
higher dimension. Since the size of the problem
is then bigger, we expect that the acceleration obtained with
our new strategy would be even more beneficial.

\section*{Acknowledgements}
This project has received funding from the Federal Ministry of Education and Research and the European High-Performance Computing Joint Undertaking (JU) under grant agreement No 955701, Time-X. The JU receives support from the European Union’s Horizon 2020 research and innovation program and Belgium, France, Germany, Switzerland.

\bibliographystyle{spmpsci}      
\bibliography{refs}   

\end{document}